\begin{document}

\title*{Persistence module and Schubert calculus}
\author{Yasuaki Hiraoka, Kohei Yahiro, Chenguang Xu}
\institute{Yasuaki Hiraoka \at Kyoto University Institute for Advanced Study, Kyoto University. Yoshida Ushinomiya-cho, Sakyo-ku, Kyoto 606-8501, Japan. \email{hiraoka.yasuaki.6z@kyoto-u.ac.jp}
\and Kohei Yahiro \at Kyoto University Institute for Advanced Study, Kyoto University. Yoshida Ushinomiya-cho, Sakyo-ku, Kyoto 606-8501, Japan. \email{yahiro.kohei.5r@kyoto-u.ac.jp}
\and Chenguang Xu \at Kyoto University Institute for Advanced Study, Kyoto University. Yoshida Ushinomiya-cho, Sakyo-ku, Kyoto 606-8501, Japan. \email{xu.chenguang.k34@kyoto-u.jp}
}

\maketitle

\abstract{
A multiplication on persistence diagrams is introduced by means of Schubert calculus. The key observation behind this multiplication comes from the fact that the representation space of persistence modules has the structure of the Schubert decomposition of a flag. In particular, isomorphism classes of persistence modules correspond to Schubert cells, thereby the Schubert calculus naturally defines a multiplication on persistence diagrams. 
The meaning of the multiplication on persistence diagrams is carried over from that on Schubert calculus, i.e., algebro-geometric intersections of varieties of persistence modules. 
}

\section{Introduction}
\label{sec:introduction}
Persistence module \cite{ELZ} is one of the most established methods in topological data analysis \cite{carlsson}, and its mathematical structures have been studied in detail. For example, the interval decomposition of persistence modules \cite{ZC} and its stability property \cite{CEH} provide us with reliable data descriptors called persistence diagrams, which have been applied in various scientific research \cite{PNAS_carlsson, PNAS_chad, PNAS_hiraoka, NC_hiraoka}. Furthermore, vectorizations of persistence diagrams \cite{landscape, PWGK1, PWGK2, PSSK} provide us with additive operations and metrics on persistence diagrams. This theoretical extension clarifies several important geometric and probabilistic aspects of persistence diagrams \cite{HST}, and accordingly, they are utilized in machine learning applications. 

From this historical background, it is natural to ask next whether we can introduce a proper multiplication operation on persistence diagrams. In this paper, we show that Schubert calculus \cite{fulton,ikeda} can be well-defined on persistence diagrams and provides a multiplication on them.
Here, we remark that Schubert calculus is studied as intersection theory of flag varieties in algebraic geometry, and there are many combinatorial research including Young diagrams,  Schubert polynomials, and so on. 

The key to introducing Schubert calculus on persistence diagrams comes from a flag structure equipped in persistence modules. In the setting of graded modules \cite{knudson, ZC}, we know that allowable basis change operation should respect the birth times of generators, and this rule restricts basis change matrices to be (upper) triangular. It follows from this fact that a natural structure of flags in a vector space will be induced in persistence modules. From this correspondence, we see that isomorphism classes of persistence modules can be equivalently regarded as Schubert cells, and accordingly, we have a multiplication on persistence diagrams induced by that on Schubert varieties. 
Furthermore, the meaning of the multiplication on persistence diagrams is carried over from that on Schubert calculus, i.e., algebro-geometric intersections of varieties of persistence modules. 

This paper is organized as follows. Section \ref{sec:decomposition} summarizes some preliminaries about flag varieties and the Schubert decomposition, and then shows the correspondence between isomorphism classes of persistence modules and Schubert cells. In Section \ref{sec:calculus}, we show a multiplication on persistence diagrams based on Schubert calculus, as well as a connection to Schubert polynomials. Several examples of multiplications on persistence diagrams are also shown there. 

\section{Persistence module and Schubert decomposition}
\label{sec:decomposition}

\subsection{Flag variety}
\label{sec:flag}
Let $E=\C^n$  be an $n$-dimensional ambient $\C$-vector space. A (complete) flag is a sequence of vector subspaces
\[
V_{\bullet}: \{\mathbf{0}\}=V_0\subset V_1\subset \dots \subset V_n=E,\quad \dim V_i=i. 
\]
As a dual notation, a flag can also be expressed as 
\[
V^{\bullet}: E=V^0\supset V^1\supset \dots \supset V^n=\{\mathbf{0}\},\quad {\rm codim~} V^i=i.
\]
The set of all flags in $E$ is  denoted by 
$\cF_n$, called the flag variety. We can construct a surjective map
\[
\begin{aligned}
\pi: \GL_n(\C)&\to  \cF_n\\
({\bm v}_1 \dots {\bm v}_n)&\mapsto (V_i:=\langle{{\bm v}_1,\dots,{\bm v}_i\rangle})_{i=1}^n,
\end{aligned}
\]
where $\GL_n(\C)$ is the general linear group, $({\bm v}_1 \dots {\bm v}_n)$ is an $n\times n$ non-singular matrix expressed by its column vectors, and $\langle{\bm v}_1,\dots,{\bm v}_i\rangle$ is the subspace spanned by $\{{\bm v}_1,\dots,{\bm v}_i\}$. Also, it is easy to see that $\GL_n(\C)$ naturally acts on $\cF_n$, and this action makes $\cF_n$  a homogeneous space. Hence, we have $\GL_n(\C)/B\simeq \cF_n$, where $B$ is the subset of upper triangular matrices.
This bijection provides $\cF_n$ with a structure of a projective variety \cite{humphreys}.

\subsection{Schubert decomposition}
\label{sec:decompositon}

\begin{defn}
Let $S_n$ denote the symmetric group on $n$ elements. For $w\in S_n$, we define

\begin{enumerate}[(i)]
    \item  $V^w_\bullet\in\cF_n$ to be a flag associated with $w$ given by
\[
V^w_i:=\langle \bm{e}_{w(1)},\dots,\bm{e}_{w(i)}\rangle, 
\quad 1\leq i\leq n,
\]
    \item  $F^\bullet_w$ to be the flag complement to $V^w_\bullet$ indexed by codimension:
\[
F^i_w:=\langle\bm{e}_{w(i+1)},\dots,\bm{e}_{w(n)}\rangle,\quad 0\leq i\leq n-1,
\]
    \item  $\mathscr{U}_w$ to be  the set of flags in $\cF_n$ in general position with $F_w^{\bullet}$:
\[
\mathscr{U}_w:=\{V_\bullet\in\cF_n~:~ F^i_w\cap V_i=\{0\}, 1\leq i\leq n\},
\]
    \item  $U_w\subseteq \GL_n(\C)$ to be the set of matrices satisfying the following conditions:
\[
U_w=\{\xi\in\GL_n(\C)~:~\xi_{w(j),j}=1\;\text{for}\;1\leq j\leq n\;\text{and}\; \xi_{i,k}=0\;\text{for}\;1\leq i\leq n,\;k>w^{-1}(i)\},
\]
\item 
\[
U_w^+:= \{\xi\in U_w~:~\xi_{jk}=0,\; j>w(k)\},
\]
\[
U_w^-:= \{\xi\in U_w~:~\xi_{jk}=0,\; j<w(k)\}.
\]
\end{enumerate}
\end{defn}

\begin{defn}
For $w\in S_n$, the images  
$X_w^\circ\coloneqq\pi(U_w^+)$ and $\Omega^\circ_{w}\coloneqq\pi(U_w^-)$
are called the \textit{Schubert cell} and \textit{opposite Schubert cell}, respectively. 
\end{defn}

Then, we have the following decomposition of the flag variety $\cF_n$.
\begin{thm}[\cite{fulton, ikeda}]\label{thm:cell-decomp}
The flag variety can be decomposed in the following two ways:
\begin{equation}\label{eq:schubert_decomposition}
\cF_n=\bigsqcup_{w\in S_n}X_w^\circ=\bigsqcup_{w\in S_n}\Omega^\circ_{w}.
\end{equation}
Both are called the Schubert cell decomposition of $\cF_n$. 
\end{thm}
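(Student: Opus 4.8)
The plan is to reduce the statement to a concrete column‑echelon normal form for matrices in $\GL_n(\C)$. Since $\pi$ is surjective and factors through the bijection $\GL_n(\C)/B\simeq\cF_n$, it suffices to prove that every right coset $gB$, $g\in\GL_n(\C)$, meets $\bigcup_{w\in S_n}U_w^+$ in exactly one point, and, separately, meets $\bigcup_{w\in S_n}U_w^-$ in exactly one point. Up to translation by permutation matrices this is precisely the Bruhat decomposition of $\GL_n(\C)$, for which one could simply cite \cite{fulton,ikeda}; below I indicate the direct argument, which is constructive and keeps the bookkeeping transparent.

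\emph{Covering.} For $\cF_n=\bigcup_w X_w^\circ$, I would run Gaussian elimination by columns on a representative $g$, i.e.\ right multiplication by invertible upper triangular matrices: such an operation rescales a column and adds multiples of earlier columns to later ones, so it fixes the coset $gB$ and the induced flag $\pi(g)$ (indeed it preserves the span of every initial segment of columns). Process columns $k=1,\dots,n$ in order: having already normalized columns $1,\dots,k-1$ so that their pivots (bottommost nonzero entries) lie in distinct rows $w(1),\dots,w(k-1)$, use those columns to clear rows $w(1),\dots,w(k-1)$ of column $k$. Because the operations keep the span of the first $k$ columns equal to $V_k$ while the first $k-1$ span $V_{k-1}$, and $\dim V_k=k>\dim V_{k-1}$, the new column $k$ lies outside $V_{k-1}$, hence is nonzero; its bottommost nonzero entry sits in a fresh row $w(k)$, which I rescale to $1$. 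The resulting matrix lies in $U_w^+$ for the permutation $w$ thus produced, so $\pi(g)\in X_w^\circ$. Running the same algorithm with ``topmost'' in place of ``bottommost'' yields the $U_w^-$ normal form and the covering $\cF_n=\bigcup_w\Omega^\circ_w$; alternatively this second case follows from the first via the automorphism of $\cF_n$ induced by the coordinate reversal $\bm{e}_i\mapsto\bm{e}_{n+1-i}$.

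\emph{Disjointness.} Here I would exhibit a combinatorial invariant of a flag that recovers $w$. Suppose $V_\bullet=\pi(u)$ with $u\in U_w^+$ and columns $\bm{u}_1,\dots,\bm{u}_n$. Since the pivots $w(1),\dots,w(n)$ occupy distinct rows, the bottommost nonzero coordinate of $\sum_{j\le k}c_j\bm{u}_j$ equals $\max\{w(j):c_j\neq 0\}$; writing $V^e_p:=\langle\bm{e}_1,\dots,\bm{e}_p\rangle$, it follows that $V_k\cap V^e_p=\langle\bm{u}_j:j\le k,\ w(j)\le p\rangle$, and hence
\[
\dim\bigl(V_k\cap V^e_p\bigr)=\#\{\,j\le k:\ w(j)\le p\,\}.
\]
Therefore $w(k)=\min\{\,p:\ \dim(V_k\cap V^e_p)>\dim(V_{k-1}\cap V^e_p)\,\}$ depends only on $V_\bullet$, so $V_\bullet$ can lie in at most one Schubert cell; the opposite case is identical with $V^e_\bullet$ replaced by the complementary flag $F^\bullet_e$ of part (ii) of the Definition. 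Combining this with the covering step gives both decompositions in \eqref{eq:schubert_decomposition}.

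\emph{Main obstacle.} The only genuinely delicate point is the disjointness bookkeeping: one must read off from the definition of $U_w^+$ which entry of each column is the pivot (the \emph{largest} row index carrying a nonzero entry, with the previous pivot rows cleared), and then verify that the rank formula for $\dim(V_k\cap V^e_p)$ comes out with its inequalities in the correct direction; keeping these conventions coherent across $U_w^+$, $U_w^-$, and the two reference flags $V^e_\bullet$ and $F^\bullet_e$ is where care is required. A secondary, purely routine point is to confirm that the column reduction never leaves $\GL_n(\C)$ and never stalls — exactly the dimension count $\dim V_k>\dim V_{k-1}$ used above — and that its output is unique within the coset, which is immediate since the algorithm depends only on $gB$ and acts as the identity on each $U_w^+$.
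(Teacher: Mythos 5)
The paper offers no proof of this theorem---it is quoted from \cite{fulton, ikeda}---so there is nothing internal to compare against; what you have written is the standard Bruhat-decomposition argument via column echelon forms, and it is correct. Your covering step (right multiplication by $B$ realizes exactly the operations ``rescale a column, add earlier columns to later ones,'' hence fixes the flag, and the greedy pivot selection lands in $U_w^+$) and your disjointness step (recovering $w(k)$ from the jump positions of $p\mapsto\dim(V_k\cap\langle\bm{e}_1,\dots,\bm{e}_p\rangle)$, which depends only on the flag) together give both decompositions in \eqref{eq:schubert_decomposition}; I checked the rank formula and the pivot conventions against the paper's definition of $U_w$, $U_w^+$, $U_w^-$ and they are consistent. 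Two small remarks. First, ``use those columns to clear rows $w(1),\dots,w(k-1)$ of column $k$'' deserves one line: the $(k-1)\times(k-1)$ submatrix of the already-normalized columns on the pivot rows is unitriangular up to reordering, so the required combination of earlier columns exists and is unique; this is exactly the bookkeeping you flag. Second, the theorem only needs covering plus disjointness of the images $X_w^\circ$; your stronger claim that each coset $gB$ meets $\bigcup_w U_w^+$ in \emph{exactly} one point is true but its justification (``the algorithm depends only on $gB$'') is the one sentence that is not fully argued---if you want it, show directly that $u,u'\in U_w^+$ with $u'=uC$, $C\in B$, forces $C=I$ by reading off the pivot rows column by column. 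Neither point is a gap in the proof of the stated theorem.
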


For $w\in S_n$, let us define $d_w(i,j)=\#\{s\leq j~:~w(s)\geq i\}$.
Then, the Bruhat order on $S_n$ is given by 
\[
v\leq w \xLeftrightarrow{{\rm def}} d_v(i,j)\leq d_w(i,j)\quad{\rm for}~\forall i,j
\]
for $v,w\in S_n$.

\begin{defn}\label{defn:sch-var}
For $w\in S_n$, $X_w=\bigsqcup_{u\leq w} X^\circ_u$ and $\Omega_{w}=\bigsqcup_{u\geq w}\Omega_u^\circ$ are called the Schubert variety and the opposite Schubert variety, respectively. 
\end{defn}
It is known that those varieties are realized as the Zariski closure 
$X_w=\overline{X_w^{\circ}}$,
and $\Omega_w=\overline{\Omega_w^\circ}$.
\subsection{Correspondence between Schubert cells and persistence modules}
Let $\C[z]$ be the polynomial ring with a single variable $z$, and ${\rm mod}(\C[z])$ be the category of non-negatively graded $\C[z]$-modules. In this paper, we take the following definition of persistence modules. 
\begin{defn}
A (one-parameter) persistence module $M$ is defined as a finitely generated object in ${\rm mod}(\C[z])$.
\end{defn}
\begin{rem}{\rm (\cite{ZC})}
In this setting, the interval decomposition
\[
M\simeq \left(\bigoplus_{i=1}^p
(z^{b_i})/(z^{d_i})
\right)\oplus 
\left(\bigoplus_{i=p+1}^{p+q}
(z^{b_i})
\right)
\]
of the persistence module $M$ is derived by the structure theorem for finitely generated graded modules over $\C[z]$ (PID). Here, $(z^b)$ denotes the ideal generated by a monomial $z^b$. The multiset $D=\{[b_i,d_i)~:~i=1,\dots,p\}\sqcup\{[b_i,\infty)~:~i=p+1,\dots,p+q\}$ of intervals in $\R$ is called the persistence diagram or barcode. For each interval, $b_i$ and $d_i$ are called the birth time and death time, respectively. 
\end{rem}

For a persistence module $M$, we can construct a minimal free resolution 
\begin{equation}\label{eq:MFR}
\begin{tikzcd}
0\arrow[r]& F_1 \arrow[r,"\partial_1"]& F_0 \arrow[r,"\epsilon"] & M\arrow[r]& 0,
\end{tikzcd}
\end{equation}
with some free modules $F_0,F_1\in {\rm mod}(\C[z])$. 
Therefore, we have
\[
M\simeq F_0/\ker\epsilon=F_0/\image\partial_1,
\]
implying that the parametrization on isomorphism classes is given by the embedding $F_1\xrightarrow{\partial_1}F_0$. Hereafter, we regard $F_1$ as a submodule in $F_0$. 
Recall that the free modules $F_0$ and $F_1$ take the  forms
\begin{equation}\label{eq:F0F1}
	F_0\cong \bigoplus_{i=1}^m z^{b_i}\C[z],
	\quad
	F_1\cong \bigoplus_{i=1}^n z^{d_i}\C[z],	
\end{equation}
where $\bm{b}: b_1\leq \dots\leq b_m$ and $\bm{d}: d_1\leq\dots\leq d_n$ are birth and death sequences, respectively.
They satisfy the following conditions
\begin{enumerate}[(i)]\label{enum:restriction}
    \item $m\geq n$: any generator of $M$ needs to be born before the death, but can also persist to infinity;
    \item $b_i\leq d_i$ for $i=1,\dots,n$: the number of existing death generators shall not surpass that of birth generators.
\end{enumerate}

If $m>n$, by adding death generators distant enough, the number of two types of generators can be equalized without losing the essential structure. So we only consider the case $m=n$ (i.e., $\rank M=0$). 
Furthermore, for the sake of simplicity, we assume the following setting.
\begin{ass}\label{ass:bd_assumption}
The birth and death sequence is 
strict ($b_1< \dots< b_n, d_1<\dots< d_n$), and
separated ($b_n<d_1$).
\end{ass}
In fact, we can obtain a similar result without this assumption by replacing $B$ with a general parabolic subgroup in the following discussion.
We abbreviate the condition in Assmption \ref{ass:bd_assumption} as $\bm{b}<\bm{d}$.

For reformulating one-parameter persistence modules as flags, we first set an ambient free module as
\[
	F=\bigoplus_{i=1}^n \C[z]=\left\{\sum_{i=1}^n a_ie_i\colon a_i\in \C[z]\right\},
\]
where $\{e_i\colon i=1,\dots,n\}$ is a basis of $F$, and $F_1\subseteq F_0$ are regarded as submodules in $F$.

Now let us set a basis of $F_0$ as $F_0=\langle \xi_1,\dots,\xi_n\rangle$, where $\xi_i$'s are homogeneous with $\deg\xi_i=b_i$. The matrix formulation of $F_0$ in $F$ is given by
\begin{equation}\label{eq:mat-reform}
\begin{aligned}
\begin{pmatrix}\xi_1&\cdots&\xi_n\end{pmatrix}&=
\begin{pmatrix}e_1&\cdots&e_n\end{pmatrix}
\begin{pmatrix}
    \lambda_{11}z^{b_1} & \cdots & \lambda_{1n}z^{b_n}\\
	\vdots & & \vdots\\
	\lambda_{n1}z^{b_1}&\cdots&\lambda_{nn}z^{b_n}
\end{pmatrix}\\
&=\begin{pmatrix}e_1&\cdots&e_n\end{pmatrix}
\cdot\Lambda\cdot\diag(z^{b_1},\dots,z^{b_n}),
\end{aligned}
\end{equation}
where $\diag(a_1,\dots,a_n)$ expresses the diagonal matrix with $a_i$ at the $i$th entry.
Since $\langle \xi_1,\dots,\xi_n\rangle$ is a basis, the matrix $\Lambda=(\lambda_{ij})$ is an element of $\GL_n(\C)$.

Let us define a collection of
submodules of $F_0$ as 
\[
	\cK(\bm{b},\bm{d})=\cK:=\left\{K\subseteq F_0: K\cong \bigoplus_{i=1}^n z^{d_i}\C[z]\right\}.
\]
First, let us remark on the correspondence between base changes on submodules in $\cK(\bm{b},\bm{d})$ and flags in $\C^n$.
Let $K=\langle \eta_1,\dots,\eta_n \rangle=\langle \eta_1',\dots,\eta_n' \rangle$ be two expressions using different generators with $\deg(\eta_i)=\deg(\eta'_i)=d_i$ for $K\in \cK(\bm{b},\bm{d})$. 
The basis change matrix $C(z)$ for $(\eta'_1\dots \eta'_n)=(\eta_1\dots \eta_n)\cdot C(z)$ takes the following  upper triangular form:
\begin{equation}\label{eq:base_change}
\begin{aligned}
C(z)
&=\diag(z^{d_1},\dots,z^{d_n})^{-1}\cdot C\cdot \diag(z^{d_1},\dots,z^{d_n})\\
&=
\begin{pmatrix}
z^{-d_1}&&0\\
&\ddots&\\
0&&z^{-d_n}
\end{pmatrix}
\begin{pmatrix}
c_{11}&c_{12}&\cdots&c_{1n}\\
0&c_{22}&\cdots&c_{2n}\\
\vdots&\vdots&\ddots&\vdots\\
0&0&\cdots&c_{nn}
\end{pmatrix}
\begin{pmatrix}
z^{d_1}&&0\\
&\ddots&\\
0&&z^{d_n}
\end{pmatrix}.
%
\end{aligned}
\end{equation}
When we set an ordering of the basis of $F_0$ as $(\xi_1\cdots \xi_n)$, the submodule $K=\langle \eta_1,\dots,\eta_n\rangle\in \cK(\bm{b},\bm{d})$ can be represented as a matrix form
\begin{equation}\label{eq:birth-to-death-trans}
\begin{aligned}
	(\eta_1 \cdots \eta_n)
	&= (\xi_1\cdots \xi_n)
	\begin{pmatrix}
	a_{11}z^{d_1-b_1} & \dots & a_{1n}z^{d_n-b_1}\\
	\vdots & & \vdots\\
	a_{n1}z^{d_1-b_n}&\dots&a_{nn}z^{d_n-b_n}
	\end{pmatrix}\\
	&=(\xi_1\cdots \xi_n)\diag(z^{-b_1},\cdots,z^{-b_n})\cdot
	A\cdot \diag(z^{d_1},\dots,z^{d_n}),
\end{aligned}
\end{equation}
where $d_j\neq b_i$ for $a_{ij}\neq 0$ due to the minimality of the free resolution. We note $A=(a_{ij})\in \GL_n(\C)$. 
Using this formula and  (\ref{eq:base_change}), the basis change $(\eta_1'\dots \eta_n')=(\eta_1\dots \eta_n)C(z)$ becomes
\begin{equation}\label{eq:base-change}
\begin{aligned}
	(\eta_1'\dots \eta_n')&=(\eta_1\dots \eta_n)C(z)\\
	&=(\xi_1\cdots \xi_n)\diag(z^{-b_1},\cdots,z^{-b_n})\cdot A\cdot  C\cdot \diag(z^{d_1},\dots,z^{d_n}).
\end{aligned}
\end{equation}
Namely, the basis change of $K$ can be performed by acting an upper triangular matrix $C$ to the matrix representation $A$.


\begin{prop} \label{lem:submodule_flag_correspondence}
Assume $\bm{b}<\bm{d}$. Then, there exists a bijective correspondence:
\[
	\ca{K}(\bm{b},\bm{d})\cong \GL_n(\C)/B \cong \cF_n.
\]
\end{prop}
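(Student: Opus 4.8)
The plan is to build the bijection $\mathscr{K}(\bm b,\bm d)\cong\GL_n(\C)/B$ explicitly using the matrix representation~\eqref{eq:birth-to-death-trans}, and then invoke the already-established identification $\GL_n(\C)/B\cong\cF_n$ from Section~\ref{sec:flag}. First I would fix, once and for all, the ambient basis $(\xi_1,\dots,\xi_n)$ of $F_0$ as in~\eqref{eq:mat-reform}, so that every $K\in\mathscr{K}(\bm b,\bm d)$, being free on homogeneous generators $\eta_1,\dots,\eta_n$ of prescribed degrees $d_1,\dots,d_n$, can be written as $(\eta_1\cdots\eta_n)=(\xi_1\cdots\xi_n)\diag(z^{-b_i})\,A\,\diag(z^{d_j})$ with $A=(a_{ij})\in\GL_n(\C)$; the separation hypothesis $\bm b<\bm d$ guarantees $d_j-b_i>0$ for all $i,j$, so the entries $a_{ij}z^{d_j-b_i}$ are genuine polynomials and minimality imposes no further vanishing. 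This assigns to each $K$ a well-defined $\GL_n(\C)$-orbit: different choices of homogeneous generators of $K$ differ by a base-change matrix $C(z)$ of the form~\eqref{eq:base_change}, i.e. $C\in B$ acting on the right, and by~\eqref{eq:base-change} the representing matrix changes from $A$ to $AC$. Hence the map
\[
\Phi:\mathscr{K}(\bm b,\bm d)\longrightarrow \GL_n(\C)/B,\qquad K\mapsto A\cdot B,
\]
is well defined.

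Next I would check injectivity: if $K,K'$ have representatives $A,A'$ with $A'=AC$ for some $C\in B$, then reading~\eqref{eq:base-change} backwards shows $(\xi_1\cdots\xi_n)\diag(z^{-b_i})A\,\diag(z^{d_j})$ and the primed version span the same submodule, so $K=K'$. For surjectivity, given any $A\in\GL_n(\C)$ I would define $\eta_j:=\sum_i a_{ij}z^{d_j-b_i}\xi_i\in F_0$ (a legitimate element since all exponents are positive by $\bm b<\bm d$), set $K:=\langle\eta_1,\dots,\eta_n\rangle$, and verify that $K\in\mathscr{K}(\bm b,\bm d)$, i.e. that $K$ is free of rank $n$ with $\deg\eta_j=d_j$. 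Freeness follows because $\C[z]$ is a PID, the $\eta_j$ are homogeneous, and the ``leading'' transition matrix $A$ is invertible over $\C$; concretely $\bigoplus_j z^{d_j}\C[z]\to K$, $z^{d_j}\mapsto\eta_j$ is surjective by construction and injective because a relation among the $\eta_j$ would, after clearing the common structure, force an $\C[z]$-linear — hence $\C$-linear on each graded piece — relation among the columns of $A$. Composing $\Phi$ with the bijection $\GL_n(\C)/B\xrightarrow{\sim}\cF_n$ of Section~\ref{sec:flag} then yields the claimed $\mathscr{K}(\bm b,\bm d)\cong\cF_n$, and one may record that under this composite $K$ corresponds to the flag $V_i=\langle\bar\eta_1,\dots,\bar\eta_i\rangle$ obtained from the images of the $\eta_j$ modulo $zF_0$ (equivalently, from the columns of $A$).

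The main obstacle I expect is the surjectivity/freeness step, specifically verifying that an arbitrary $A\in\GL_n(\C)$ really produces a submodule isomorphic to $\bigoplus z^{d_i}\C[z]$ and not something with the wrong graded structure or torsion. The subtlety is that $K$ sits inside $F_0$, whose generators $\xi_i$ themselves have shifted degrees $b_i$, so one must track the interaction of the two diagonal twists $\diag(z^{-b_i})$ and $\diag(z^{d_j})$ and confirm that invertibility of the \emph{scalar} matrix $A$ is exactly the condition making $\{\eta_j\}$ a minimal generating set — this is where Assumption~\ref{ass:bd_assumption} ($\bm b<\bm d$, strict and separated) is used in an essential way, since it makes the degree shifts behave like a single common translation and collapses the relevant parabolic to $B$. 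A clean way to dispatch this is to observe that conjugating everything by the fixed diagonal matrices reduces the claim to the standard fact that $K'\mapsto$ (span of columns of its leading matrix) sets up a bijection between rank-$n$ free homogeneous submodules of $\C[z]^n$ in a fixed degree and $\GL_n(\C)/B$; the rest is bookkeeping with~\eqref{eq:base_change}–\eqref{eq:base-change}.
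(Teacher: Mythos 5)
Your proposal is correct and follows essentially the same route as the paper: both use the matrix representation \eqref{eq:birth-to-death-trans} to attach a matrix $A\in\GL_n(\C)$ to a choice of homogeneous generators of $K$, and both identify the ambiguity in that choice with right multiplication by the upper triangular group $B$ via \eqref{eq:base_change}--\eqref{eq:base-change}. The only cosmetic difference is the direction of the map (you go from $\mathscr{K}(\bm b,\bm d)$ to $\GL_n(\C)/B$, the paper constructs $\varphi:\GL_n(\C)\to\mathscr{K}(\bm b,\bm d)$ and quotients by its fibers), and you spell out the freeness check for $K_A$ slightly more carefully than the paper does.
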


\begin{proof}
The second bijection is already shown in Section \ref{sec:flag}, so we will show the first bijection. 
Given a matrix $A=(a_{ij})\in \GL_n(\C)$, we want to associate it with an element in $\ca{K}(\bm{b},\bm{d})$. Death generators $\eta_1,\dots,\eta_n$ can be obtained from the birth generators via
\begin{equation}
	(\eta_1 \cdots \eta_n)= (\xi_1\cdots \xi_n)
	\begin{pmatrix}
	a_{11}z^{d_1-b_1} & \dots & a_{1n}z^{d_n-b_1}\\
	\vdots & & \vdots\\
	a_{n1}z^{d_1-b_n}&\dots&a_{nn}z^{d_n-b_n}
	\end{pmatrix}.
\end{equation}

Since $\deg\xi_i=b_i$, it is easy to verify $\deg \eta_i=d_i$ for $i=1,\dots,n$. Because $\{\xi_1,\dots, \xi_n\}$ is a basis of $F_0$ and $A\in \GL_n(\C)$, $\{\eta_1,\dots,\eta_n\}$ is $\C[z]$-linearly independent. Define a submodule $K_A:=\langle \eta_1,\dots,\eta_n\rangle\subset F_0$. This operation gives us a map
\[
\begin{aligned}
\varphi\colon \GL_n(\C)&\to\ca{K}(\bm{b},\bm{d})\\
A&\mapsto K_A
\end{aligned}.
\]

Given any submodule $K\in \ca{K}(\bm{b},\bm{d})$, as $K$ is free, we can choose a basis $K=\langle \eta_1,\dots,\eta_n\rangle$ and construct a matrix $A$ as in equation (\ref{eq:birth-to-death-trans}), implying that 
$\varphi$ is surjective. 
Then, the map $\varphi$ induces an equivalence relation $\sim$  on $\GL_n(\C)$:
\[
\text{for any $A,\,A'\in\GL_n(\C)$, $A \sim A'$ if and only if $\varphi(A)=\varphi(A')$}.
\]
Thus $\GL_n(\C)/\sim$ is bijective to $\ca{K}(\bm{b},\bm{d})$. 
Let $A$ and $A'$ be such two equivalent matrices and $\{\eta_1,\dots,\eta_n\}$, $\{\eta'_1,\dots,\eta'_n\}$ be the corresponding two generators. By (\ref{eq:base-change}), they satisfy the relation

\[
	(\eta_1'\dots \eta_n')=(\xi_1\cdots \xi_n)\diag(z^{-b_1},\cdots,z^{-b_n})\cdot A\cdot  C\cdot \diag(z^{d_1},\dots,z^{d_n}),
\]
where the shape of $C$ is specified in  (\ref{eq:base_change}) by setting $z=1$. Comparing this equation with the defining equation of $\{\eta'_1,\dots,\eta'_n\}$:
\[
\begin{aligned}
	(\eta'_1 \cdots \eta'_n)
	&=(\xi_1\cdots \xi_n)\diag(z^{-b_1},\cdots,z^{-b_n})\cdot
	A'\cdot \diag(z^{d_1},\dots,z^{d_n})
\end{aligned}
\]
implies  $A\cdot C=A'$ for some $C$. It is easy to verify that the collection of all valid shapes of $C$ is exactly $B$, from which the bijection follows.
\end{proof}

We recall that $\pi$ denotes the surjection $\pi: \GL_n(\C)\to\cF_n$.
Combining with the Schubert cell decomposition of the flag variety in  (\ref{eq:schubert_decomposition}), it follows that $\cK(\bm{b},\bm{d})$ has a cell decomposition:

\begin{align}\label{eq:partition_on_cK}
	\cK(\bm{b},\bm{d})=\bigsqcup_{w\in S_n}\cK^\circ_w(\bm{b},\bm{d}),\quad \cK^\circ_w(\bm{b},\bm{d})=\varphi\circ \pi^{-1}(X^\circ_w).
\end{align}

\begin{thm}\label{thm:pd_schubert}
The Schubert cell decomposition above gives a parametrization of isomorphism classes of  persistence modules with the birth sequence $\bm{b}$ and death sequence $\bm{d}$. 
\end{thm}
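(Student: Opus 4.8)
The idea is to realize the parametrization through the assignment $K\mapsto F_0/K$ sending a submodule to the quotient it resolves. First I would verify that this descends to isomorphism classes and is surjective. For $K\in\cK(\bm{b},\bm{d})$, write $K=K_A$ as in Proposition~\ref{lem:submodule_flag_correspondence}; by~(\ref{eq:birth-to-death-trans}) the inclusion $K\hookrightarrow F_0$ has matrix with entries $a_{ij}z^{d_j-b_i}$, and $\bm{b}<\bm{d}$ forces $d_j-b_i>0$ whenever $a_{ij}\neq 0$, so the inclusion lands in $(z)F_0$ and $0\to K\to F_0\to F_0/K\to 0$ is a \emph{minimal} free resolution. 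Hence $F_0/K$ is a persistence module whose birth sequence (the degrees of the generators of $F_0$) is $\bm{b}$ and whose death sequence (the degrees of the generators of $K$) is $\bm{d}$. Conversely, any persistence module $M$ with birth sequence $\bm{b}$ and death sequence $\bm{d}$ admits a minimal free resolution~(\ref{eq:MFR}) in which, after composing with a graded isomorphism, $F_0=\bigoplus_{i}z^{b_i}\C[z]$; then $K:=\image\partial_1\in\cK(\bm{b},\bm{d})$ and $F_0/K\cong M$. So $K\mapsto[F_0/K]$ is a well-defined surjection from $\cK(\bm{b},\bm{d})$ onto the isomorphism classes in question, and it remains to show that its fibres are exactly the Schubert cells $\cK^\circ_w(\bm{b},\bm{d})$.

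The core step is the matrix criterion $F_0/K_A\cong F_0/K_{A'}\iff A'\in BAB$. For ``$\Leftarrow$'', write $A'=GAC$ with $G,C\in B$: a base change of the death generators by $C$ leaves the submodule unchanged, while a base change of the generators $\xi_i$ of $F_0$ by $G$ — a graded automorphism of $F_0$ since $b_1<\dots<b_n$ — acts on the normal form~(\ref{eq:birth-to-death-trans}) by $A\mapsto GA$ (cf.~(\ref{eq:base-change}), now letting the ambient basis move as well); composing, this automorphism sends $K_A$ to $K_{GAC}=K_{A'}$ and so descends to an isomorphism $F_0/K_A\xrightarrow{\sim}F_0/K_{A'}$. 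For ``$\Rightarrow$'', a graded isomorphism $\psi\colon F_0/K_A\to F_0/K_{A'}$ lifts, since $F_0$ is free, to a degree-$0$ graded map $g\colon F_0\to F_0$ with $g(K_A)\subseteq K_{A'}$; lifting $\psi^{-1}$ to $h$ similarly, the endomorphism $hg-\mathrm{id}$ of $F_0$ has image inside $K_A$. But $K_A\cong\bigoplus_{i}z^{d_i}\C[z]$ has no nonzero homogeneous element in degrees $\le b_n$, whereas $F_0$ is generated in degrees $b_1<\dots<b_n$; hence $(hg-\mathrm{id})(\xi_i)\in(K_A)_{b_i}=\{0\}$ for every $i$, so $hg=\mathrm{id}$ and likewise $gh=\mathrm{id}$, making $g$ a graded automorphism with $g(K_A)=K_{A'}$. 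Reading off the invertible upper triangular matrix $G$ with $g(K_A)=K_{GA}$ and applying Proposition~\ref{lem:submodule_flag_correspondence} to $K_{GA}=K_{A'}$ yields $A'=GAC$ for some $C\in B$, i.e.\ $A'\in BAB$.

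It remains to match $BAB$-classes with Schubert cells. Recall that the Schubert cell decomposition of $\cF_n$ (Theorem~\ref{thm:cell-decomp}) coincides with its decomposition into orbits of the left $B$-action, equivalently $\pi^{-1}(X^\circ_w)=BwB$; hence by~(\ref{eq:partition_on_cK}) and Proposition~\ref{lem:submodule_flag_correspondence}, $K_A\in\cK^\circ_w(\bm{b},\bm{d})$ iff $A\in BwB$, so $K_A$ and $K_{A'}$ lie in the same Schubert cell iff $A$ and $A'$ lie in the same Bruhat double coset, iff $A'\in BAB$. Together with the previous step, the fibres of $K\mapsto[F_0/K]$ are precisely the cells $\cK^\circ_w(\bm{b},\bm{d})$, $w\in S_n$, so this map induces a bijection between the set of Schubert cells and the set of isomorphism classes of persistence modules with birth sequence $\bm{b}$ and death sequence $\bm{d}$; as a consistency check, by the interval decomposition both sets have $n!$ elements under Assumption~\ref{ass:bd_assumption}. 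The \textbf{main obstacle} is the ``$\Rightarrow$'' half of the matrix criterion, namely forcing the lift $g$ to be an \emph{automorphism} of $F_0$ rather than merely an endomorphism; this is exactly where separatedness $b_n<d_1$ in Assumption~\ref{ass:bd_assumption} enters (alternatively one invokes uniqueness of minimal free resolutions over the graded ring $\C[z]$), while everything else is bookkeeping with the triangular normal forms~(\ref{eq:base_change})--(\ref{eq:base-change}) already established.
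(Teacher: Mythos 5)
Your proof is correct, and in outline it travels the same road as the paper — minimal free resolutions, the identification $\cK(\bm{b},\bm{d})\cong\GL_n(\C)/B$ of Proposition~\ref{lem:submodule_flag_correspondence}, and the cell decomposition~(\ref{eq:partition_on_cK}) — but it supplies, as a genuine lemma, the step that the paper's own proof only asserts. The paper argues that the submodule $K_A$ determines a flag (via the right $B$-action $A\sim AC$) and hence a cell with $A\in U_w^+$; it does not explain why two \emph{distinct} submodules $K_A\neq K_{A'}$ yield isomorphic quotients exactly when they lie in the same cell, which is the actual content of the theorem. You prove this: graded automorphisms of $F_0$ act on normal forms by left multiplication by $B$, any graded isomorphism $F_0/K_A\cong F_0/K_{A'}$ lifts to such an automorphism by the Nakayama-type degree argument (this is precisely where $b_n<d_1$ from Assumption~\ref{ass:bd_assumption} enters), and the fibres of $K\mapsto[F_0/K]$ are therefore the double cosets $BAB$, which match the cells because $\pi^{-1}(X_w^\circ)=U_w^+B=BwB$ is the Bruhat decomposition. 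The paper instead implicitly leans on the interval decomposition $\bigoplus_i(z^{b_{w(i)}})/(z^{d_i})$ for $A\in U_w^+$, stated just after the theorem, to see that distinct cells give distinct barcodes. Your double-coset argument reaches the same conclusion intrinsically, without computing the barcode, at the price of invoking uniqueness of minimal free resolutions and the standard fact that Schubert cells are the left $B$-orbits; both of these are standard and consistent with the paper's conventions, so there is no gap.
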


\begin{proof}
The isomorphism class of a persistence module $M$ is determined by its minimal free resolution
\[
\begin{tikzcd}
0\arrow[r]& F_1 \arrow[r,"\partial_1"]& F_0 \arrow[r,"\epsilon"] & M\arrow[r]& 0,
\end{tikzcd}
\]
so $M\simeq F_0/K_A$ for some $K_A\in \cK(\bm{b},\bm{d})$. 
Since $K_A= K_{A'}$ if and only if $A\sim A'$, the isomorphism class $[M]$ uniquely assigns a Schubert cell $X^\circ_w$, where $w$ is determined by $A\in U^+_w$.
Conversely, it is easy to see that, for a given Schubert cell $X^\circ_w$, we can obtain an isomorphism class of a persistence module corresponding to that Schubert cell.
\end{proof}


From this theorem, for a persistence module $M\simeq F_0/K_A$ with $A\in U^+_w$, 
we obtain the interval decomposition:
\[
M\simeq  \bigoplus^n_{i=1}
(z^{b_{w(i)}})/(z^{d_i}).
\]
which subsequently determines a persistence diagram
\[
	D^\circ_w(\bm{b},\bm{d})=\{(b_{w(i)},d_i)\colon i=1,\dots,n\}.
\]
Theorem \ref{thm:pd_schubert} and  (\ref{eq:partition_on_cK}) show the bijective correspondence among the Schubert cells $X^\circ_w$,  $\cK^\circ_w(\bm{b},\bm{d})$, and persistence diagrams $D^\circ_w(\bm{b},\bm{d})$ for given birth and death sequences $\bm{b}$ and $\bm{d}$. 
We also note that the set of persistence diagrams can be regarded as a poset by the Bruhat order.

\section{Schubert calculus on persistence modules}
\label{sec:calculus}
In this section, we first briefly review the formulation of the Chow ring and Schubert polynomials by following \cite{fulton, ikeda}. Based on this preparation, we introduce a multiplication on persistent diagrams in Section  \ref{sec:multiplication_on_ph}. 

\subsection{Chow ring}\label{sec:Chow_ring}
The length of a permutation $w\in S_n$ is given by $\ell(w)=\#\{(i,j)\in [n]\times [n]~:~i<j, w(i)>w(j)\}$.
An element $w_0\in S_n$ given by
\[
w_0(i)=n-i+1,\quad i=1,\dots,n
\]
is the unique permutation having the longest length $\ell(w_0)=\frac{n(n-1)}{2}$. For $w\in S_n$, we also define $w^\lor:=w_0w$ (i.e., $i\mapsto n-w(i)+1$).

We assign a formal symbol $\sigma_w$, called a Schubert class, to each $w\in S_n$ and consider the following additive groups
\begin{equation}    A^k(\cF_n):=\bigoplus_{w\in S_n,\ell(w)=k}\Z\sigma_w,\quad
    A^{*}(\cF_n):= \bigoplus_{k=0}^{\frac{n(n-1)}{2}}A^{k}(\cF_n).
\end{equation}
We first recall the following theorem
\begin{thm}{\rm (\cite{kleiman1974transversality}\label{thm:kleiman})}
Let $Y\subseteq \cF_n$ be an irreducible variety of codimension $k$, and $w\in S_n$ be a permutation with length $\ell(w)=k$. Then for a generic $g\in \GL_n(\C)$,
\[
\#(Y\cap g X_w)
\]
is a finite number irrelevant to the $g$ chosen.
\end{thm}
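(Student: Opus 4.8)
This is Kleiman's generic transversality theorem specialized to the homogeneous space $\cF_n$, so the plan is to deduce it from Kleiman's theorem together with a short invariance argument using rational equivalence on $\cF_n$. First I would assemble the inputs already in place: by Section~\ref{sec:flag}, $\cF_n\simeq\GL_n(\C)/B$ is a smooth projective (hence complete) variety on which the connected group $G=\GL_n(\C)$ acts transitively; the Schubert variety $X_w=\overline{X_w^\circ}$ is irreducible, being the closure of the affine cell $\pi(U_w^+)$, and counting the free entries of a matrix in $U_w^+$ gives $\dim X_w=\ell(w)=k$; and $Y$ is irreducible of dimension $\dim\cF_n-k$. We work over $\C$, i.e.\ in characteristic zero with an algebraically closed field.

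Next I would invoke Kleiman's theorem in the form: for a connected algebraic group $G$ acting transitively on a variety $Z$ over a field of characteristic zero, and subvarieties $V,W\subseteq Z$, there is a dense open $U\subseteq G$ such that for $g\in U$ the intersection $gV\cap W$ is either empty or of pure dimension $\dim V+\dim W-\dim Z$, and, if in addition $V$ and $W$ are smooth, it is transverse. Applied with $Z=\cF_n$, $V=X_w$, $W=Y$, the dimension count is $k+(\dim\cF_n-k)-\dim\cF_n=0$, so for $g\in U$ the set $gX_w\cap Y$ is a closed $0$-dimensional subset of the complete variety $\cF_n$, hence finite; this is the finiteness assertion. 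To upgrade it to transversality despite $X_w$ (and possibly $Y$) being singular, I would apply the same dimension bound to the pairs obtained by replacing $X_w$, resp.\ $Y$, with its singular locus (taken componentwise): these have total dimension strictly below $\dim\cF_n$, so for generic $g$ the intersection $gX_w\cap Y$ avoids both singular loci, and on the smooth parts Kleiman's characteristic-zero clause gives transversality. Hence for generic $g$, $Y\cap gX_w$ is a reduced finite set of transverse points.

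For independence of the generic $g$ I would pass to the Chow ring $A^*(\cF_n)$. Since $G=\GL_n(\C)$ is connected (in fact rational), the class $[gX_w]\in A^k(\cF_n)$ is independent of $g$: the family $\{gX_w\}_{g\in G}$ is parametrized by a connected variety, so its members are mutually rationally equivalent, and therefore so are the $0$-cycles $Y\cap gX_w$ and $Y\cap g'X_w$ for $g,g'\in U$. Rationally equivalent $0$-cycles on a complete variety have the same degree, so $\deg(Y\cap gX_w)$ is constant on $U$; and since this cycle is reduced for $g\in U$, its degree equals the cardinality $\#(Y\cap gX_w)$. Thus $\#(Y\cap gX_w)$ equals the fixed number $\deg\bigl([Y]\cdot[X_w]\bigr)$, independent of the chosen generic $g$ (possibly $0$, if the generic translate misses $Y$).

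The real content is entirely Kleiman's theorem, so the work amounts to checking its hypotheses here — $\cF_n$ smooth and $\GL_n(\C)$-homogeneous, both from Section~\ref{sec:flag}, and the ground field $\C$ — together with the identification $\dim X_w=\ell(w)=k$, which is exactly what makes the generic intersection $0$-dimensional rather than of excess dimension. The one genuinely additional point, and the place that needs care, is the invariance of the count: one must restrict to generic $g$ so that the intersection is transverse (so that the point count agrees with the intersection number) and then invoke the invariance of the degree of a $0$-cycle under rational equivalence on the complete variety $\cF_n$.
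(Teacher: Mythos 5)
The paper does not prove this statement; it is quoted verbatim as a classical result of Kleiman, so there is no internal proof to compare against. Your argument is a correct and essentially complete reconstruction of the standard proof: Kleiman's generic transversality on the homogeneous space $\cF_n \simeq \GL_n(\C)/B$ gives a zero-dimensional (hence finite, by completeness) intersection for generic $g$; the dimension count is right, since with the paper's conventions $X_w^\circ=\pi(U_w^+)$ has exactly $\ell(w)$ free matrix entries, so $\dim X_w=\ell(w)=k=\operatorname{codim} Y$; your componentwise singular-locus argument correctly upgrades finiteness to a reduced transverse intersection; and the invariance of the count then follows from the rational equivalence of the translates $gX_w$ over the connected (rational) group $\GL_n(\C)$ together with the constancy of the degree of rationally equivalent $0$-cycles on a complete variety. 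The only point I would flag is cosmetic: you should make explicit that the number obtained, $\deg([Y]\cdot[X_w])$, may be zero, which is consistent with the theorem's phrasing "a finite number" — you do note this parenthetically, so the proposal stands as written.
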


Following the same notation in Theorem \ref{thm:kleiman}, we define the symbol of an irreducible variety $Y$ by
\begin{equation}\label{eq:fundamental-cl}
[Y]:=\sum_{w\in S_n,\ell(w)=k}\#(Y\cap gX_w)\sigma_{w}\in A^k(\cF_n).
\end{equation}
Then, we can show $[\Omega_w]=\sigma_w$ and $[X_w]=\sigma_{w^\lor}$.
Theorem \ref{thm:kleiman} also guarantees that, for $u,v,w\in S_n$ with $\ell(u)=\ell(v)+\ell(w)$, the number
\begin{equation}\label{eq:fundamental-cl-2}
c^u_{wv}:=\#(
\Omega_w\cap g\Omega_v \cap X_u
)
\end{equation}
is a finite number irrelevant to the generic $g$ chosen. 

The above argument leads to the definition of the product of Schubert classes by
\[
\sigma_w\cdot\sigma_v=
\sum_{\substack{u\in S_n\\ \ell(u)=\ell(w)+\ell(v)}}c^u_{wv}\sigma_{u}.
\]
The additive group $A^*(\cF_n)$ equipped with this product is called the Chow ring. 
The coefficients $c^u_{wv}$ are called \textit{Littlewood–Richardson coefficients}. There exists no known rule to compute them in general cases, and limited rules are discovered only in several special cases. For example, when one of the multipliers is of form $\sigma_{r_i}$ for a transposition $r_i$ (only exchanging $i \leftrightarrow i+1$), the coefficients are determined by Monk's formula.

\begin{thm}{\rm (\cite{monk1959geometry}\label{thm:Monk_finite})}
Let $t_{ab}\in S_n$ denote the transposition between $a$ and $b$. For any $w\in S_n$ and $1\leq i<n$, 
\begin{equation}\label{eq:Monk}
\sigma_w\sigma_{r_i}=\sum_{\substack{a\leq i<b\\\ell(w t_{ab})=\ell(w)+1}}\sigma_{wt_{ab}}
\end{equation}
holds.
\end{thm}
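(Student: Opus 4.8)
The plan is to read off the coefficients $c^u_{wr_i}$ in the expansion $\sigma_w\sigma_{r_i}=\sum_{\ell(u)=\ell(w)+1}c^u_{wr_i}\,\sigma_u$ from their geometric meaning in (\ref{eq:fundamental-cl-2}), and to show that each is $0$ or $1$, with the value $1$ occurring precisely for $u=wt_{ab}$ such that $\ell(wt_{ab})=\ell(w)+1$ and $a\le i<b$. First I would use that the opposite Schubert variety $\Omega_w$ and the Schubert variety $X_u$ are attached to opposite Borel subgroups, so they meet transversally and their intersection $R^u_w:=\Omega_w\cap X_u$ (the corresponding Richardson variety) is empty unless $w\le u$ and, when $w\le u$, is irreducible of dimension $\ell(u)-\ell(w)$. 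Since $\ell(r_i)=1$, only $u$ with $\ell(u)=\ell(w)+1$ contribute to the sum, and for such $u$ (with $w\le u$) the curve $R^u_w$ has exactly two $T$-fixed points, $V^w_\bullet$ and $V^u_\bullet$: the $T$-fixed points of $\Omega_w\cap X_u$ are the $V^v_\bullet$ with $w\le v\le u$, and the Bruhat interval $[w,u]$ collapses to $\{w,u\}$ for a step of length one. These $u$ are exactly the covers of $w$ in the Bruhat order, each of the form $u=wt_{ab}$ with $w(a)<w(b)$.

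Because $\Omega_w$ and $X_u$ are already transverse, $c^u_{wr_i}=\#(\Omega_w\cap g\Omega_{r_i}\cap X_u)=\#(R^u_w\cap g\Omega_{r_i})$, and Kleiman transversality (Theorem \ref{thm:kleiman}, applied to the irreducible curve $R^u_w$ and the Schubert divisor $\Omega_{r_i}$, whose class is $[\Omega_{r_i}]=\sigma_{r_i}$) identifies this number with the intersection number $\deg\bigl(\mathcal{O}(\Omega_{r_i})|_{R^u_w}\bigr)$, a nonnegative integer independent of the generic $g$. Thus everything reduces to the degree of the Schubert divisor on each cover curve $R^{wt_{ab}}_w$. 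For that I would use $\sigma_{r_i}=c_1(L_i)$, where $L_i$ is the line bundle on $\cF_n$ with fibre $(\det V_i)^\vee$ at $V_\bullet$ (equivalently $\sigma_{r_i}=x_1+\cdots+x_i$ in the Borel presentation; see \cite{fulton,ikeda}). At a $T$-fixed point $V^v_\bullet$ the torus acts on the fibre of $L_i$ with weight $-(\epsilon_{v(1)}+\cdots+\epsilon_{v(i)})$; since $wt_{ab}$ differs from $w$ only by exchanging the values in positions $a$ and $b$, the two fixed-point weights of $L_i$ along $R^{wt_{ab}}_w$ agree when $a$ and $b$ lie on the same side of $i$ and differ by $\pm(\epsilon_{w(a)}-\epsilon_{w(b)})$ when $a\le i<b$. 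Dividing by the tangent weight $\pm(\epsilon_{w(a)}-\epsilon_{w(b)})$ of the $T$-stable curve $R^{wt_{ab}}_w$ at $V^w_\bullet$ gives $\deg(L_i|_{R^{wt_{ab}}_w})=1$ when $a\le i<b$ and $0$ otherwise; summing these contributions over all covers $u=wt_{ab}$ yields (\ref{eq:Monk}).

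I expect this last degree computation to be the only genuinely nontrivial point; the reductions preceding it are formal, using only Theorem \ref{thm:kleiman} and the standard facts about intersections of opposite Schubert varieties. The cleanest rigorous route to it is the equivariant (moment-graph) description of $\cF_n$ sketched above. An alternative more in the spirit of Section \ref{sec:decomposition} is to parametrize $R^{wt_{ab}}_w$ explicitly inside the big cell in the matrix coordinates introduced there, restrict a linear defining equation of $\Omega_{r_i}$ to that parametrized curve, and count its zeros; this is elementary but requires more bookkeeping. A third, purely algebraic proof becomes available once Schubert polynomials have been introduced, via the divided-difference calculus applied to the product of the Schubert polynomial of $w$ with $x_1+\cdots+x_i$.
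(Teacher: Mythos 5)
The paper does not prove Monk's formula; it is stated purely as a citation to \cite{monk1959geometry}, so there is no internal proof to compare against. What you have written is an outline of the classical geometric (Chevalley-type) proof, and it is essentially correct: the reduction of $c^u_{wr_i}=\#(\Omega_w\cap g\Omega_{r_i}\cap X_u)$ to a degree computation on the Richardson curve $R^u_w=\Omega_w\cap X_u$ is sound, only covers $u=wt_{ab}$ of $w$ contribute, and the fixed-point weight comparison correctly isolates the condition $a\le i<b$. Two points would need to be firmed up to turn the sketch into a complete proof. First, the facts you invoke about Richardson varieties (nonempty if and only if $w\le u$; irreducible and reduced of dimension $\ell(u)-\ell(w)$; $T$-fixed points indexed by the Bruhat interval $[w,u]$; the triple intersection for generic $g$ landing in the smooth transverse locus) are themselves nontrivial theorems of Richardson and Deodhar and are not consequences of anything established in Section \ref{sec:decomposition}, so they need citations or proofs. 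Second, the localization quotient by itself only determines $\deg(L_i|_{R^{wt_{ab}}_w})$ up to sign; you should state explicitly that the sign is fixed by the nonnegativity of the intersection number (which you noted) or by matching the tangent weight at the chosen fixed point with the orientation of the weight difference. With those caveats the argument goes through; your third suggested route via divided differences is also standard and would be the most self-contained given that the paper develops Schubert polynomials in Section \ref{sec:Schubert_polynomial}.
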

Since the symmetric group $S_n$ is generated by $r_1,\dots,r_{n-1}$, we can compute any products of Schubert classes by the repeated applications of Monk's formula in principle.

Let $n$ be a positive integer. Recall that a polynomial $f\in \Z[z_1,\dots,z_n]$ is called a \textit{symmetric polynomial} if for any $w\in S_n$, $f(z_{w(1)},\dots,z_{w(n)})=f(z_1,\dots,z_n)$. In particular, we define the \textit{elementary symmetric polynomial} of degree $k$  to be 
\[
e_k=\sum_{1\leq i_1<\cdots<i_k\leq n}z_{i_1}\cdots z_{i_k}.
\]
Let us consider an ideal generated by elementary symmetric polynomials up to degree $n$:
\[
I_n:= \langle e_k~|~1\leq k\leq n\rangle.
\]
Then, we can identify the Chow ring $A^*(\cF_n)$ with the following quotient ring.

\begin{thm}\label{thm:ring_iso}
{\rm (\cite{fulton})}
The Chow ring $A^*(\cF_n)$ is isomorphic to the following graded ring
\[
\cR_n:= \Z[z_1,\dots,z_n]/I_n.
\]
\end{thm}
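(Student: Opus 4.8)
The plan is to establish the isomorphism $A^*(\cF_n)\cong \cR_n$ via the standard Borel-type presentation argument, adapted to the combinatorial setup of this paper. First I would construct a ring homomorphism $\psi\colon \Z[z_1,\dots,z_n]\to A^*(\cF_n)$ by sending the generators $z_i$ to the Schubert classes associated with the simple transpositions, concretely $z_i\mapsto \sigma_{r_i}-\sigma_{r_{i-1}}$ (with the convention $\sigma_{r_0}=0$), or equivalently by using the Chern classes of the tautological line bundles $V^{i-1}/V^i$ of the flag variety. The point of this choice is that the power sums and elementary symmetric functions in the $z_i$ then correspond to well-understood classes; in particular $e_k(z_1,\dots,z_n)$ maps to the $k$-th Chern class of the trivial bundle $E$, which is $0$ for $k\geq 1$. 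This shows $I_n\subseteq \ker\psi$, so $\psi$ descends to a graded ring homomorphism $\bar\psi\colon \cR_n\to A^*(\cF_n)$.

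Next I would prove $\bar\psi$ is an isomorphism by a dimension count in each graded piece. The key input is that $\cR_n$ is a free $\Z$-module with a known basis: the Schubert polynomials $\{\mathfrak{S}_w : w\in S_n\}$ of Lascoux--Sch\"utzenberger form a $\Z$-basis of $\cR_n$, with $\mathfrak{S}_w$ homogeneous of degree $\ell(w)$. Hence $\operatorname{rank}_\Z \cR_n^{(k)} = \#\{w\in S_n : \ell(w)=k\}$, which is exactly $\operatorname{rank}_\Z A^k(\cF_n)$ by the definition of the latter as $\bigoplus_{\ell(w)=k}\Z\sigma_w$. So it suffices to show $\bar\psi$ is surjective, since a surjection between free $\Z$-modules of the same finite rank in each degree is an isomorphism. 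Surjectivity follows from showing every Schubert class $\sigma_w$ lies in the image: one uses that $\sigma_{r_i}$ is in the image by construction, and then Monk's formula (Theorem \ref{thm:Monk_finite}) together with divided-difference operators expresses an arbitrary $\sigma_w$ as a polynomial in the $\sigma_{r_i}$, which pulls back to a polynomial in the $z_i$. Concretely, $\bar\psi(\mathfrak{S}_w) = \sigma_w$ can be verified by induction on codimension $\frac{n(n-1)}{2}-\ell(w)$ using the compatibility of divided differences on both sides.

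The main obstacle I expect is the verification that $\bar\psi$ carries Schubert polynomials exactly to Schubert classes (equivalently, the surjectivity together with the matching of bases), rather than the formal setup. This requires either (a) importing the theory of divided difference operators $\partial_i$ on both $\cR_n$ and $A^*(\cF_n)$ and checking $\bar\psi\circ\partial_i = \partial_i\circ\bar\psi$ on generators, then using that both $\mathfrak{S}_w$ and $\sigma_w$ are generated from the top class $\mathfrak{S}_{w_0}=z_1^{n-1}z_2^{n-2}\cdots z_{n-1}$ and $\sigma_{w_0}$ by applying sequences of $\partial_i$'s indexed by reduced words for $w_0 w$; or (b) a purely numerical argument establishing surjectivity and invoking the rank equality to avoid identifying the bases explicitly. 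Given the paper's combinatorial flavor and its citation of \cite{fulton}, I would present route (a) in outline and defer the detailed divided-difference computations to the references, since those are entirely standard; alternatively, if one wishes to keep the argument self-contained, route (b) reduces the problem to the single nontrivial fact that $\{z^\alpha : \alpha_i \leq n-i\}$ spans $\cR_n$ over $\Z$, which can be shown by a straightforward Gr\"obner/staircase reduction modulo $I_n$.
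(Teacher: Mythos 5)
The paper does not prove this theorem; it is quoted from the cited reference \cite{fulton}, so there is no internal proof to compare against. Your outline is essentially the standard Borel-presentation argument as given in that reference (send $z_i\mapsto\sigma_{r_i}-\sigma_{r_{i-1}}$, kill the elementary symmetric polynomials via Chern classes of the trivial bundle, then match graded ranks using the staircase-monomial basis of $\cR_n$ and surjectivity from Monk's formula together with divided differences), and it is correct as a proof sketch.
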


\subsection{Schubert polynomial}\label{sec:Schubert_polynomial}

For $m>n\in \N$, we define an embedding $\phi_{nm}: S_n\rightarrow S_m$ by
$\phi_{nm}(w)=w'$, where
\[
w'(i)=\left\{
\begin{array}{ll}
w(i),& i=1,\dots,n\\
i,& i=n+1,\dots,m.
\end{array}
\right.
\]
Then $(S_n,\phi_{nm})$ is a direct system over $\N$, leading to the infinite symmetric group
\[
S_{\infty}\coloneqq \lim\limits_{\substack{\longrightarrow\\n}} S_n=\bigsqcup_{n}S_n/\sim
\]
as the direct limit. For a permutation $w\in S_\infty$, we write $w\in S_n$ if $w$ fixes all numbers after $n$.

We also note that a natural embedding 
\[
\begin{aligned}
\iota_n: \C^n &\hookrightarrow \C^{n+1}\\
(x_1,\dots,x_n) &\mapsto (x_1,\dots,x_n,0)
\end{aligned}
\]
induces an embedding $\iota_n: \cF_n\to \cF_{n+1}$, so any flag can be regarded as a flag in a space with a higher dimension. 
Similarly, we define a composition
\[
\iota_{nm}:=\iota_{m-1}\circ\cdots\circ \iota_n\colon \cF_n \to \cF_m.
\]
The pullback of this map induces a morphism between Chow rings:
\[
\iota_{nm}^*\colon A^*(\cF_m)\to A^*(\cF_n).
\]
Then, this naturally defines the inverse limit
\begin{eqnarray*}
A^*(\cF_\infty)&:=&\lim_{\substack{\longleftarrow\\n}}A^*(\cF_n)\\
&=&\bigoplus_{r=0}^\infty\lim_{\substack{\longleftarrow\\n}} A^r(\cF_n).
\end{eqnarray*}

Let us denote Schubert classes in $A^*(\cF_n)$ as $\sigma_w^{(n)}$ for $w\in S_n$ by specifying $n$. 
Then, it follows from the fact (e.g., Proposition 10.3 in \cite{ikeda})
\[
\iota^*_n(\sigma_w^{(n+1)})=\left\{
\begin{array}{ll}
\sigma_w^{(n)},&w\in S_n\\
0,&w\notin S_n
\end{array}
\right.
\]
that the inverse limit can be written as
\begin{eqnarray*}
A^*(\cF_\infty)&=&\bigoplus_{r=0}^\infty\bigoplus_{\substack{w\in S_\infty\\\ell(w)=r}} \Z\hat{\sigma}_w\\
&=&\bigoplus_{w\in S_\infty}\Z\hat{\sigma}_w,
\end{eqnarray*}
where 
\[
\hat{\sigma}_w=(\sigma_w^{(n)})\in \lim_{\substack{\longleftarrow\\n}}A^r(\cF_n)
\]
denotes the limit of the Schubert class for $r=\ell(w)$.

Now, let us construct a polynomial ring with infinitely many variables. We note that any polynomial in $\Z[z_1,\dots,z_n]$ can be naturally embedded in $\Z[z_1,\dots,z_m]$ for $m\geq n$. Hence, we obtain a directed system whose direct limit
\[
\Z[z]:= \lim_{\substack{\longrightarrow\\n}}\Z[z_1,\dots,z_n] 
\]
is the polynomial ring in which we introduce Schubert polynomials below. 

\begin{thm}{\rm (\cite{fulton, ikeda})}
The map
\begin{align*}
 \Psi\colon \Z[z] &\to A^*(\cF_{\infty})=\bigoplus_{r=0}^{\infty}\lim_{\substack{\longleftarrow\\n}}A^r(\cF_n)\\
    f_r(z) &\mapsto (f_r^{(n)}~{\rm mod}~I_n),
\end{align*}
is bijective, where $f_r(z)\in\Z[z]$ is a homogeneous polynomial with degree $r$ and $f_r^{(n)}= f_r(z_1,\dots,z_n,0,\dots)$. Here, the ring structure is induced by the isomorphism in Theorem \ref{thm:ring_iso}.
\end{thm}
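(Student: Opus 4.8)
The plan is to establish the bijectivity of $\Psi$ by first reducing it to a statement at finite level and then invoking Theorem~\ref{thm:ring_iso}. Concretely, I would fix a degree $r$ and restrict $\Psi$ to the degree-$r$ homogeneous component $\Z[z]_r$, so that it lands in $\lim_{\leftarrow n} A^r(\cF_n)$. Unwinding the direct limit definitions, an element of $\Z[z]_r$ is represented by a homogeneous polynomial $f_r \in \Z[z_1,\dots,z_N]_r$ for some $N$ large enough, and the map sends it to the compatible family $(f_r^{(n)} \bmod I_n)_n$. The first thing to check is that this family is genuinely an element of the inverse limit, i.e.\ that $\iota_{n}^*(f_r^{(n+1)} \bmod I_{n+1}) = f_r^{(n)} \bmod I_n$; this follows because the pullback $\iota_n^*$ on $\cR_{n+1} = \Z[z_1,\dots,z_{n+1}]/I_{n+1} \to \cR_n = \Z[z_1,\dots,z_n]/I_n$ is induced by the substitution $z_{n+1} \mapsto 0$ (compatibly with the identification in Theorem~\ref{thm:ring_iso}), and $f_r^{(n+1)}|_{z_{n+1}=0} = f_r^{(n)}$ by definition of $f_r^{(n)} = f_r(z_1,\dots,z_n,0,\dots)$.

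Next I would prove surjectivity. Given a compatible family $(g_n)_n \in \lim_{\leftarrow n} A^r(\cF_n)$, write each $g_n$ via Theorem~\ref{thm:ring_iso} as a class in $\cR_n$ of degree $r$. The key point is a stabilization fact: for $n$ large relative to $r$, the natural map $\Z[z_1,\dots,z_n]_r \to (\cR_n)_r$ on degree-$r$ pieces is such that one can lift $g_n$ to an honest polynomial, and moreover these lifts can be chosen coherently. The cleanest route is to use the Schubert-class basis: $A^r(\cF_n) = \bigoplus_{\ell(w)=r, w\in S_n} \Z \sigma_w^{(n)}$, and the compatibility under $\iota_n^*$ together with the cited fact $\iota_n^*(\sigma_w^{(n+1)}) = \sigma_w^{(n)}$ (resp.\ $0$ when $w\notin S_n$) shows that a compatible family is exactly determined by a finite $\Z$-linear combination $\sum_{\ell(w)=r} a_w \hat\sigma_w$ with $w$ ranging over $S_\infty$. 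Since $\{\ell(w)=r\}$ meets only finitely many $S_n \setminus S_{n-1}$ and every Schubert class $\sigma_w^{(n)}$ is the image of the Schubert polynomial $\mathfrak{S}_w \in \Z[z]_r$ (a finite polynomial lying in $\Z[z_1,\dots,z_{N}]$ for suitable $N$), the element $\sum_w a_w \mathfrak{S}_w \in \Z[z]_r$ maps to the prescribed family, giving surjectivity.

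For injectivity, suppose $f_r \in \Z[z]_r$ has $\Psi(f_r) = 0$, i.e.\ $f_r^{(n)} \in I_n$ for all $n$. Pick $N$ with $f_r \in \Z[z_1,\dots,z_N]$; then for every $n \ge N$ we have $f_r = f_r^{(n)} \in I_n = \langle e_1,\dots,e_n\rangle \subseteq \Z[z_1,\dots,z_n]$. The claim is that a fixed polynomial in $\Z[z_1,\dots,z_N]$ that lies in $I_n$ for all sufficiently large $n$ must be zero. This is where the degree bound matters: the nonconstant generators $e_k$ of $I_n$ have degree $k$, so expressing $f_r$ (of degree $r$) in $I_n$ only involves $e_1,\dots,e_r$ with polynomial coefficients; but $e_1,\dots,e_r$ in $n \gg r$ variables have no algebraic relations among themselves in low degrees, and letting $n\to\infty$ one sees the only way $f_r$ can be a combination of $e_1,\dots,e_r$ simultaneously for all large $n$ is $f_r = 0$ — equivalently, $\bigcap_{n} (I_n \cap \Z[z_1,\dots,z_N]) = 0$, which one can prove by passing to the inverse limit $\Z[z_1,z_2,\dots]$ where the $e_k$ form a regular sequence of elements of positive degree whose common multiples in any fixed degree vanish.

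The main obstacle I expect is the injectivity step, precisely the assertion $\bigcap_n I_n = 0$ inside $\Z[z]$ (or the dual statement that no nonzero fixed-degree polynomial is simultaneously in every $I_n$). The naive worry is that even though each $I_n$ is "small" in low degrees, the intersection over all $n$ could conceivably be nontrivial; resolving this requires either the regular-sequence argument in the inverse limit $\Z[z_1,z_2,\dots]$ or a direct combinatorial argument comparing $f_r$ against the Schubert-polynomial basis of the quotient $\cR_n$ for a single large $n$ and noting the representatives stabilize. Everything else — well-definedness, the limit-compatibility of $\iota_n^*$ with substitution $z_{n+1}\mapsto 0$, and surjectivity via Schubert polynomials — is essentially bookkeeping built on Theorem~\ref{thm:ring_iso} and the cited pullback formula for $\sigma_w$, with ring-homomorphism property of $\Psi$ inherited degree-by-degree from the isomorphisms $\cR_n \cong A^*(\cF_n)$.
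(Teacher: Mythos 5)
The paper itself gives no proof of this theorem --- it is imported verbatim from \cite{fulton, ikeda} --- so there is no in-paper argument to compare yours against; I can only assess your proof on its own terms. Your overall strategy (work degree by degree, use Theorem~\ref{thm:ring_iso} levelwise, surjectivity via Schubert classes, injectivity via $\bigcap_n I_n=0$) is the standard one, and the well-definedness step is fine. But one step is actually false and two are incomplete.

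The false step is the finiteness claim in your surjectivity argument: for every $r\geq 1$ the set $\{w\in S_\infty : \ell(w)=r\}$ is infinite --- already for $r=1$ it contains every simple transposition $r_i$, and $r_i\in S_{i+1}\setminus S_i$, so it meets \emph{every} $S_n\setminus S_{n-1}$. Consequently a compatible family in the naive inverse limit of the $A^r(\cF_n)$ is an arbitrary, possibly infinite, formal sum $\sum_{\ell(w)=r}a_w\hat{\sigma}_w$; that inverse limit is the direct product $\prod_{\ell(w)=r}\Z$, which is uncountable, whereas $\Z[z]_r$ is countable, so $\Psi$ cannot be surjective onto it. The theorem only holds with the target read as the direct sum $\bigoplus_{w}\Z\hat{\sigma}_w$ (which is how the paper subsequently rewrites $A^*(\cF_\infty)$, and how \cite{ikeda} sets up the graded limit); with that reading, surjectivity reduces to hitting each individual $\hat{\sigma}_w$. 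Two further points. (i) Hitting $\hat{\sigma}_w$ by the Schubert polynomial $\mfkS_w$ is circular in the paper's logical order, since $\mfkS_w$ is \emph{defined} there by $\Psi(\mfkS_w)=\hat{\sigma}_w$; you must construct the preimage independently, e.g.\ via the divided differences $\partial_{i_k}\cdots\partial_{i_1}z^{\delta(n)}$ together with the Bernstein--Gelfand--Gelfand/Demazure compatibility of $\partial_i$ with Schubert classes. (ii) Your injectivity step is, as you yourself flag, the real gap: saying that $e_1,\dots,e_r$ ``have no relations in low degrees'' does not show that a fixed homogeneous $f\in\Z[z_1,\dots,z_N]_r$ lying in $I_n$ for every $n$ must vanish --- note that $I_n\cap\Z[z_1,\dots,z_N]$ is nonzero for each single $n$ (e.g.\ $e_1^{(N)}z_1^{r-1}\in I_N$), so one genuinely needs the intersection over all $n$. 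The clean argument exhibits, for each $n$, a $\Z$-basis of the degree-$r$ part of $\cR_n$ by representatives that stabilize as $n\to\infty$ (the monomials $z^a$ with $a_i\leq n-i$, or the classes of $\mfkS_w$ for $w\in S_n$); then the coordinates of $f \bmod I_n$ are independent of $n$ for $n$ large, and they all vanish only if $f=0$.
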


\begin{defn}
The Schubert polynomial $\mfkS_w$ for $w\in S_\infty$ is defined by
\[
\Psi(\mfkS_w)=\hat{\sigma}_w.
\]
\end{defn}

The Schubert polynomials can explicitly be described by using the difference operator:
\[
\partial_if:=\frac{f-r_if}{z_i-z_{i+1}}
\]
defined on $f\in \Z[z]$. Let us define $\delta(n)=(n-1,\dots,1,0)$. 
\begin{prop}{\rm (\cite{fulton, ikeda})}
Suppose $w\in S_\infty$ is expressed in $w\in S_n$ by $w=w_0r_{i_1}\dots r_{i_k}$ with $k=\ell(w_0)-\ell(w)$, where $w_0$ is the longest element in $S_n$. 
Then the polynomial
\[
\partial_{i_k}\dots\partial_{i_1}z^{\delta(n)}
\]
is defined independently of the choice of $n$ and $(i_1,\dots,i_k)$, and coincides with the Schubert polynomial $\mfkS_w$.
\end{prop}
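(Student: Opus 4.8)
The plan is to develop the algebra of the divided difference operators $\partial_i$, use it to see that the displayed polynomial is well posed, and then identify it with $\mfkS_w$ through the recursion that $\partial_i$ induces on Schubert classes. First I would record the basic properties of $\partial_i$ on $\Z[z]$: since $f-r_if$ is antisymmetric in $z_i,z_{i+1}$ it is divisible by $z_i-z_{i+1}$, so $\partial_i$ is a well-defined operator lowering degree by one and compatible with the inclusions $\Z[z_1,\dots,z_n]\hookrightarrow\Z[z_1,\dots,z_m]$ that define $\Z[z]$. A short computation gives the twisted Leibniz rule $\partial_i(fg)=(\partial_if)g+(r_if)(\partial_ig)$, shows that $\partial_i$ annihilates symmetric polynomials (in particular $\partial_ie_k=0$, so $\partial_i$ descends to an operator on $\cR_n$), and yields the relations $\partial_i^2=0$, $\partial_i\partial_j=\partial_j\partial_i$ for $|i-j|\ge 2$, and $\partial_i\partial_{i+1}\partial_i=\partial_{i+1}\partial_i\partial_{i+1}$. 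Combined with the fact that any two reduced words of a permutation are linked by braid moves, these relations show that for a reduced word $u=r_{a_1}\cdots r_{a_p}$ the composite $\partial_u:=\partial_{a_1}\cdots\partial_{a_p}$ depends only on $u$, that it vanishes whenever the word is not reduced, and that $\partial_u\partial_v=\partial_{uv}$ if $\ell(uv)=\ell(u)+\ell(v)$ while $\partial_u\partial_v=0$ otherwise.

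Next I would use this to see that $\partial_{i_k}\cdots\partial_{i_1}z^{\delta(n)}$ is unambiguous. The hypothesis $w=w_0r_{i_1}\cdots r_{i_k}$ with $k=\ell(w_0)-\ell(w)$ forces $r_{i_1}\cdots r_{i_k}$ to be a reduced word for $w_0w$, hence $r_{i_k}\cdots r_{i_1}$ to be a reduced word for $w^{-1}w_0$, so $\partial_{i_k}\cdots\partial_{i_1}=\partial_{w^{-1}w_0}$ is independent of the choice of $(i_1,\dots,i_k)$. For independence of $n$, regard $w\in S_n\subset S_{n+1}$, write the longest elements as $w_0^{(n)}$ and $w_0^{(n+1)}$, and factor $w^{-1}w_0^{(n+1)}=(w^{-1}w_0^{(n)})(w_0^{(n)}w_0^{(n+1)})$; using $\ell(w_0^{(n+1)})=\ell(w_0^{(n)})+n$ one checks the three lengths add, so $\partial_{w^{-1}w_0^{(n+1)}}=\partial_{w^{-1}w_0^{(n)}}\circ\partial_{w_0^{(n)}w_0^{(n+1)}}$. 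It then suffices to verify the single identity $\partial_{w_0^{(n)}w_0^{(n+1)}}(z^{\delta(n+1)})=z^{\delta(n)}$, a direct computation with the difference operators. Consequently $P_w:=\partial_{i_k}\cdots\partial_{i_1}z^{\delta(n)}\in\Z[z]$ is a well-defined homogeneous polynomial of degree $\ell(w)$.

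It remains to prove $P_w=\mfkS_w$, that is, $\Psi(P_w)=\hat\sigma_w$, equivalently $P_w\bmod I_m=\sigma^{(m)}_w$ for every $m$ with $w\in S_m$. Two inputs are needed. The \emph{base case} is $z^{\delta(m)}\bmod I_m=\sigma^{(m)}_{w_0}$: the degree of $z^{\delta(m)}$ equals $\ell(w_0^{(m)})$, the top degree of $\cR_m$, where the matching component of $A^*(\cF_m)$ is free of rank one on $\sigma_{w_0}$, and $z^{\delta(m)}$ represents the class of a point (it is the distinguished top monomial of the coinvariant algebra, normalized with coefficient one). The \emph{recursive input} is that, under the isomorphism $\cR_m\cong A^*(\cF_m)$ of Theorem~\ref{thm:ring_iso}, $\partial_i$ corresponds to the Bernstein--Gelfand--Gelfand operator $(p_i)_*p_i^*$ attached to the $\mathbf{P}^1$-bundle over the partial flag variety obtained from $\cF_m$ by forgetting the $i$-th subspace, and this operator acts on the Schubert classes (normalized so that $\deg\sigma_v=\ell(v)$) by $\partial_i\sigma_v=\sigma_{vr_i}$ when $\ell(vr_i)=\ell(v)-1$ and $\partial_i\sigma_v=0$ otherwise. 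Granting these, one applies $\partial_{i_1},\partial_{i_2},\dots,\partial_{i_k}$ in turn to $\sigma^{(m)}_{w_0}$: since $w_0$ is the longest element, each prefix $w_0r_{i_1}\cdots r_{i_j}$ has length $\ell(w_0)-j$, so every step lowers the length by exactly one and no class is annihilated, and after $k$ steps one reaches $\sigma^{(m)}_{w_0r_{i_1}\cdots r_{i_k}}=\sigma^{(m)}_w$. Hence $P_w\bmod I_m=\sigma^{(m)}_w$ for every such $m$, i.e.\ $P_w=\mfkS_w$.

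The Coxeter combinatorics and the polynomial identities used above are routine; I expect the main obstacle to be the recursive input of the last step — that the algebraically defined $\partial_i$ coincides, under Borel's presentation of $\cR_m\cong A^*(\cF_m)$, with the geometric BGG operator and therefore implements the length-lowering moves on Schubert classes. Carrying this out requires the $\mathbf{P}^1$-bundle pull-back/push-forward computation together with careful bookkeeping of the conventions of Section~\ref{sec:decomposition} — the two Schubert decompositions $X^\circ_w$ versus $\Omega^\circ_w$ and the involution $w\mapsto w^{\lor}$ entering $[X_w]=\sigma_{w^{\lor}}$ — which is precisely where a left/right-multiplication swap or a sign would break the identification.
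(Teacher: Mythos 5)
The paper does not prove this proposition; it is quoted directly from the cited references (Fulton, Ikeda), so there is no internal argument to compare yours against. Judged on its own, your outline follows the standard Lascoux--Schützenberger/BGG route, and the conventions you use are consistent with the paper's: with $\sigma_w\in A^{\ell(w)}$ and $[\Omega_w]=\sigma_w$, the relevant recursion is indeed right multiplication by $r_i$, matching the factorization $w=w_0r_{i_1}\cdots r_{i_k}$. The well-definedness half is essentially complete and correct: the nil-Coxeter relations for the $\partial_i$, Matsumoto's theorem, the identification $\partial_{i_k}\cdots\partial_{i_1}=\partial_{w^{-1}w_0}$, and the stability reduction to the single identity $\partial_n\cdots\partial_1 z^{\delta(n+1)}=z^{\delta(n)}$ are exactly the right steps, and each is a routine verification.

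The gap is in the identification half, and you have named it yourself: the two inputs you ``grant'' --- that $z^{\delta(m)}\equiv\sigma_{w_0}^{(m)}\pmod{I_m}$ with coefficient exactly $1$, and that the algebraic $\partial_i$ realizes the geometric BGG operator sending $\sigma_v\mapsto\sigma_{vr_i}$ when $\ell(vr_i)=\ell(v)-1$ and to $0$ otherwise --- are not side conditions but the entire content of the theorem. Everything else in your argument is formal Coxeter combinatorics; without these two facts you have only shown that $\partial_{i_k}\cdots\partial_{i_1}z^{\delta(n)}$ is a well-defined polynomial, not that it equals $\mfkS_w$ as defined via $\Psi(\mfkS_w)=\hat\sigma_w$. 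Note also that the base case is not merely a normalization convention: the top graded piece of $\cR_m$ is free of rank one, but showing that the image of $z^{\delta(m)}$ is the class of a point with coefficient $+1$ (rather than some other integer multiple of $\sigma_{w_0}$) itself requires the $\mathbf{P}^1$-bundle pushforward computation, or equivalently the identity $\partial_{w_0}z^{\delta(m)}=1$ together with the interpretation of $\partial_{w_0}$ as integration over $\cF_m$. So the proposal is a correct and well-organized plan whose crux is deferred; to be a proof it needs the Borel-presentation/BGG comparison carried out with the paper's specific choices of $X_w^\circ$ versus $\Omega_w^\circ$ and the involution $w\mapsto w^\lor$.
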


For later use, we show the Hasse diagrams and Schubert polynomials for $S_2$ and $S_3$.
\begin{figure}\label{fig:Hasse}
\centering
\begin{tikzcd}[column sep=tiny,row sep=normal]
    {(2\ 1)} \arrow[d,no head] \arrow[r, phantom, "z_1" near start] & {} \\
    {(1\ 2)} \arrow[r, phantom, "1" near start] & {}
\end{tikzcd}
\hspace{1cm}
\begin{tikzcd}[column sep=tiny,row sep=normal]
    {} 
    & 
    {} 
    &
    {(3\ 2\ 1)} 
    \arrow[ld, no head] 
    \arrow[rd, no head] 
    \arrow[r, phantom, "z_1^2z_2" near start] 
    & 
    {}
    &
    {}
    \\
    {}
    &
    {(3\ 1\ 2)} 
    \arrow[d, no head] 
    \arrow[l, phantom, "z_1^2" near start] 
    & 
    {}
    & 
    {(2\ 3\ 1)} 
    \arrow[lld, no head] 
    \arrow[d, no head] 
    \arrow[r, phantom, "z_1z_2"{pos=1.7}] 
    &
    {}
    \\
    {}
    &
    {(2\ 1\ 3)} 
    \arrow[rd, no head] 
    \arrow[l, phantom, "z_1" near start] 
    & 
    {}
    & 
    {(1\ 3\ 2)} 
    \arrow[from=llu, crossing over, no head] 
    \arrow[ld, no head] 
    \arrow[r, phantom, "z_1+z_2"{pos=1.9}] 
    &
    {}
    \\
    {}
    &
    {}
    & 
    {(1\ 2\ 3)} \arrow[r, phantom, "1"{pos=0.2}] &
    {}
    &
    {}
\end{tikzcd}
\caption{Hasse diagram and Schubert polynomials for $S_2$ (left) and $S_3$(right).}
\end{figure}
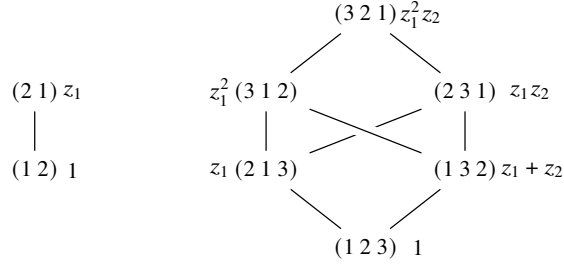

Similar to Theorem \ref{thm:Monk_finite}, the Schubert polynomials also satisfy the Monk formula:
\begin{thm}\label{thm:Monk_infinite}
For $w\in S_\infty$ and $i\geq 1$, 
\[
\mfkS_w\mfkS_{r_i}=\sum_{\substack{a\leq i<b\\\ell(w t_{ab})=\ell(w)+1}}\mfkS_{wt_{ab}}.
\]
\end{thm}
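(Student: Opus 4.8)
The plan is to deduce the infinite Monk formula (Theorem~\ref{thm:Monk_infinite}) from the finite one (Theorem~\ref{thm:Monk_finite}) by passing to the inverse limit $A^*(\cF_\infty)$ through the isomorphism $\Psi$. The point is that both $\mfkS_w\mfkS_{r_i}$ and the right-hand sum are already "stable" expressions, so the identity in $\Z[z]$ is forced by its validity at every finite level $n$.

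First I would fix $w\in S_\infty$ and $i\geq 1$, and choose $n$ large enough that $w\in S_n$ and $i<n$; this is possible since $w$ fixes all sufficiently large integers. Under $\Psi$ we have $\Psi(\mfkS_w\mfkS_{r_i})=\hat\sigma_w\cdot\hat\sigma_{r_i}$, and by definition of the ring structure on $A^*(\cF_\infty)$ the $n$-th component of this product is $\sigma_w^{(n)}\cdot\sigma_{r_i}^{(n)}$ computed in $A^*(\cF_n)\cong\cR_n$. Applying the finite Monk formula (Theorem~\ref{thm:Monk_finite}) at level $n$ gives
\[
\sigma_w^{(n)}\sigma_{r_i}^{(n)}=\sum_{\substack{a\leq i<b,\ t_{ab}\in S_n\\\ell(wt_{ab})=\ell(w)+1}}\sigma_{wt_{ab}}^{(n)}.
\]
So componentwise, and hence in $A^*(\cF_\infty)$, we get $\hat\sigma_w\hat\sigma_{r_i}=\sum\hat\sigma_{wt_{ab}}$ where the sum ranges over the pairs $a\leq i<b$ with $\ell(wt_{ab})=\ell(w)+1$.

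The remaining work is to check that this index set is genuinely independent of $n$, i.e. that no new terms appear as $n$ grows and none disappear. Concretely: if $a\leq i<b$ and $\ell(wt_{ab})=\ell(w)+1$ with $w\in S_n$, then necessarily $b\leq n$ — otherwise $wt_{ab}$ moves the symbol $b>n$, and a short count of inversions shows $\ell(wt_{ab})\geq \ell(w)+(b-i)\geq \ell(w)+2$ (using $b>n\geq i+1$), contradicting the length-one hypothesis. Hence the finite sum at level $n$ already contains every pair that could ever contribute, and the transition maps $\iota_m^*$ carry the level-$m$ identity to the level-$n$ identity. Since $\Psi$ is a ring isomorphism, pulling the identity $\hat\sigma_w\hat\sigma_{r_i}=\sum\hat\sigma_{wt_{ab}}$ back to $\Z[z]$ and using $\Psi(\mfkS_u)=\hat\sigma_u$ yields the claimed identity of Schubert polynomials.

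The main obstacle is not conceptual but bookkeeping: one must verify carefully that the condition $\ell(wt_{ab})=\ell(w)+1$ automatically bounds $b$ by the largest non-fixed point of $w$ plus one, so that the "stable" right-hand side is literally the finite right-hand side for all large $n$. Once that stability is established, the statement is an immediate transport of Theorem~\ref{thm:Monk_finite} along the isomorphism $\Psi$ and the compatibility of the $\hat\sigma_w$ with the inverse system.
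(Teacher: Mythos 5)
Your overall strategy --- transporting the finite Monk formula through the isomorphism $\Psi$ and the inverse system of the $\iota_{nm}^*$ --- is the natural one (the paper states Theorem~\ref{thm:Monk_infinite} without proof, as a known companion to Theorem~\ref{thm:Monk_finite}). However, the stability claim your argument rests on is false, and the failure occurs exactly at the step you flag as ``the main obstacle.'' You assert that if $w\in S_n$, $a\le i<b$ and $\ell(wt_{ab})=\ell(w)+1$, then $b\le n$, via the inequality $\ell(wt_{ab})\ge \ell(w)+(b-i)$. That inequality is wrong: when $w(a)<w(b)$ one has $\ell(wt_{ab})=\ell(w)+1+2\,\#\{c: a<c<b,\ w(a)<w(c)<w(b)\}$, so the length can increase by exactly $1$ even for $b>n$. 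Concretely, take $w=r_1=(2\,1)\in S_2$ and $i=1$: for $(a,b)=(1,3)$ one gets $wt_{13}=(3\,1\,2)$ with $\ell((3\,1\,2))=2=\ell(w)+1$, yet $b=3>n=2$. This is precisely the paper's first example, where $\sigma_{(21)}\cdot\sigma_{(21)}=0$ in $A^*(\cF_2)$ while $\mfkS_{(21)}^2=z_1^2=\mfkS_{(312)}\neq 0$. Your claim would force the infinite formula to coincide with the level-$n$ finite one, contradicting that example; the direction of your stability argument is backwards.

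The repair is as follows. Choose $n$ with $w\in S_n$ and $i<n$. Then every contributing pair satisfies $b\le n+1$: for $b\ge n+2$ the index $c=n+1$ satisfies $a<c<b$ and $w(a)\le n<n+1<b=w(b)$, so $\ell(wt_{ab})\ge \ell(w)+3$. But $b=n+1$ genuinely can occur, so you must apply Theorem~\ref{thm:Monk_finite} at every level $m\ge n+1$, where the finite sum already equals the full stable sum; the level-$n$ identity is then consistent because $\iota_n^*$ annihilates the classes $\sigma^{(n+1)}_{wt_{a,n+1}}$ with $wt_{a,n+1}\notin S_n$, not because those terms are absent. With that correction, the remainder of your argument (the componentwise identity in the inverse limit, followed by pulling back along the ring isomorphism $\Psi$ using $\Psi(\mfkS_u)=\hat\sigma_u$) goes through.
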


\subsection{Multiplication on persistence diagrams}\label{sec:multiplication_on_ph}

In Section \ref{sec:Chow_ring} and \ref{sec:Schubert_polynomial}, we defined the product structure on Schubert classes (and Schubert polynomials) which bijectively correspond to persistence diagrams
through flag varieties. This means that we can naturally introduce a product structure on persistence diagrams, which has a natural geometric meaning induced by that of Schubert varieties, i.e., the intersection of two isomorphism classes of persistence modules in $\cK(\bm{b},\bm{d})$. In this section, we show several examples of the products of persistence diagrams.

From Theorem \ref{thm:pd_schubert}, we have the correspondence between the Schubert variety $X^\circ_w$ and the persistence diagram 
\[
	D^\circ_w(\bm{b},\bm{d})=\{(b_{w(i)},d_i)\colon i=1,\dots,n\}
\]
for each $w\in S_n$.
In parallel to the definition $X_w=\sqcup_{u\leq w}X^\circ_u$, we introduce a symbol
\[
D_w(\bm{b},\bm{d})=\bigsqcup_{u\leq w}D_u^\circ(\bm{b},\bm{d}). 
\]
This definition is reasonable since $D_w(\bm{b},\bm{d})$ contains the information of degenerate cases. 
Also, we recall $[X_w]=\sigma_{w^\lor}.$
From these correspondences, we naturally identify the persistence diagrams $D_w(\bm{b},\bm{d})$ with $\sigma_{w^\lor}$ from now on. Similarly, we also associate $\hat{D}_w(\bm{b},\bm{d})$ with $\hat{\sigma}_w$ as a limiting object $n\rightarrow \infty$. In the following, we simply abbreviate $D_w=D_w(\bm{b},\bm{d})$. We remark that $D_{w_0}$ becomes the identity element of the product. 

\begin{ex}
For $n=2$, let us consider the persistence diagram $D_w({\bm{b},\bm{d}})$ with $w=(12)\in S_2$ (see Figure \ref{fig:pd_12}) and the product $D_w\cdot D_w$. Expressing the product in the Schubert classes lead to
\[
\sigma_{(12)^\lor}\cdot\sigma_{(12)^\lor}=\sigma_{(21)}\cdot\sigma_{(21)}.
\]
We apply Monk's formula (\ref{eq:Monk}) with $w=r_1=(21)$. In this case, the only choice for $(a,b)=(1,2)$ gives  $\ell(wt_{12})\neq \ell(w)+1$.  Therefore, 
\[
\sigma_{(12)^\lor}\cdot\sigma_{(12)^\lor}=0,
\]
meaning $D_w\cdot D_w=0$. 

On the other hand, the same product using the Schubert polynomials leads to 
\[
\mfkS_{(12)^\lor}\cdot\mfkS_{(12)^\lor}=
\mfkS_{(21)}\cdot\mfkS_{(21)}=z_1^2=
\mfkS_{(312)}=\mfkS_{(132)^\lor}.
\]
The reason for this discrepancy is caused by the fact that Monk's formula in Theorem \ref{thm:Monk_finite} fixes the total dimension $n$, while that in Theorem \ref{thm:Monk_infinite} can consider $n$ to be large enough. More explicitly, the Schubert class $\sigma_{(12)^\lor}$ expresses a flag with codimension one in $\C^2$, and then $\sigma_{(12)^\lor}\cdot \sigma_{(12)^\lor}$ corresponds to the intersection with codimension two, leading to zero in the product. On the other hand, in the setting of Schubert polynomials, we embed $w$ in $S_3$ (or $S_n$ for $n>2$), whereby $w$ is understood as $w(213)$. Then, the total space becomes $\C^3$, and hence the codimension two intersection can be nontrivial. As a result, we have $\hat{D}_w\cdot\hat{D}_w=\hat{D}_{(132)}$.

\end{ex}

\begin{ex}
For $n=3$, let us consider the product $D_{(213)}\cdot D_{(231)}$. From 
$
\sigma_{(213)^\lor}\cdot\sigma_{(231)^\lor}=\sigma_{(231)}\cdot\sigma_{(213)}
$, let us set $w=(231)$ and $r_1=(213)$. Then, for $(a,b)=(1,2),(1,3)$ in the Monk's formula (\ref{eq:Monk}), we have $\ell(wt_{12})=\ell(w)+1$ and $\ell(wt_{13})\neq \ell(w)+1$. Thus 
\[
\sigma_{(213)^\lor}\cdot\sigma_{(231)^\lor}=\sigma_{(321)}=\sigma_{(123)^\lor}.
\]
We also have the same result using the Schubert polynomials:
\[
\mfkS_{(213)^\lor}\cdot \mfkS_{(231)^\lor}=
\mfkS_{(231)}\cdot \mfkS_{(213)}=z_1z_2\cdot z_1
=\mfkS_{(321)}=\mfkS_{(123)^\lor}.
\]
Therefore, we have $D_{(213)}\cdot D_{(231)} =\hat{D}_{(213)}\cdot \hat{D}_{(231)}=D_{(123)}$.

Similarly, we can show that 
\[
D_{(231)}\cdot D_{(312)}=
\hat{D}_{(231)}\cdot \hat{D}_{(312)}=D_{(213)}+D_{(132)}.
\]
\begin{figure}[htbp]
\begin{subfigure}[t]{0.92\textwidth}
\[
\begin{aligned}
&\begin{tikzpicture}[scale=0.52]
\newcommand\step{0.5}
     \draw [ultra thick] (0.5,0)--(6.5,0);
     \node [below] at (1,0) {$b_1$};
     \node [below] at (2,0) {$b_2$};
     \node [below] at (3,0) {$b_3$};
     \node [below] at (4,0) {$d_1$};
     \node [below] at (5,0) {$d_2$};
     \node [below] at (6,0) {$d_3$};
     \draw [thick] (3,0) -- (3,3*\step);
     \draw [thick] (3,3*\step) -- (6,3*\step);
     \draw [thick] (6,3*\step) -- (6,0);
     \draw [thick] (2,0) -- (2,1*\step);
     \draw [thick] (2,1*\step) -- (4,1*\step);
     \draw [thick] (4,1*\step) -- (4,0);
     \draw [thick] (1,0) -- (1,2*\step);
     \draw [thick] (1,2*\step) -- (5,2*\step);
     \draw [thick] (5,2*\step) -- (5,0);
     \filldraw [cyan] (3,1*\step) circle [radius=.08];
     \filldraw [cyan] (3,2*\step) circle [radius=.08];
\end{tikzpicture}
\;
\begin{tikzpicture}[scale=0.52]
\draw (0,0)--(0,0);
\filldraw [black] (0,1.3) circle [radius=0.05];
\end{tikzpicture}
\;
\begin{tikzpicture}[scale=0.52]
\newcommand\step{0.5}
     \draw [ultra thick] (0.5,0)--(6.5,0);
     \node [below] at (1,0) {$b_1$};
     \node [below] at (2,0) {$b_2$};
     \node [below] at (3,0) {$b_3$};
     \node [below] at (4,0) {$d_1$};
     \node [below] at (5,0) {$d_2$};
     \node [below] at (6,0) {$d_3$};
     \draw [thick] (3,0) -- (3,2*\step);
     \draw [thick] (3,2*\step) -- (5,2*\step);
     \draw [thick] (5,2*\step) -- (5,0);
     \draw [thick] (2,0) -- (2,1*\step);
     \draw [thick] (2,1*\step) -- (4,1*\step);
     \draw [thick] (4,1*\step) -- (4,0);
     \draw [thick] (1,0) -- (1,3*\step);
     \draw [thick] (1,3*\step) -- (6,3*\step);
     \draw [thick] (6,3*\step) -- (6,0);
     \filldraw [cyan] (3,1*\step) circle [radius=.08];
\end{tikzpicture}
\;
\begin{tikzpicture}[scale=0.52]
\draw (0,0)--(0,0);
\node [below] at (0,1.3) {$=$};
\end{tikzpicture}
\;
\begin{tikzpicture}[scale=0.52]
\newcommand\step{0.5}
     \draw [ultra thick] (0.5,0)--(6.5,0);
     \node [below] at (1,0) {$b_1$};
     \node [below] at (2,0) {$b_2$};
     \node [below] at (3,0) {$b_3$};
     \node [below] at (4,0) {$d_1$};
     \node [below] at (5,0) {$d_2$};
     \node [below] at (6,0) {$d_3$};
     \draw [thick] (3,0) -- (3,3*\step);
     \draw [thick] (3,3*\step) -- (6,3*\step);
     \draw [thick] (6,3*\step) -- (6,0);
     \draw [thick] (2,0) -- (2,2*\step);
     \draw [thick] (2,2*\step) -- (5,2*\step);
     \draw [thick] (5,2*\step) -- (5,0);
     \draw [thick] (1,0) -- (1,1*\step);
     \draw [thick] (1,1*\step) -- (4,1*\step);
     \draw [thick] (4,1*\step) -- (4,0);
     \filldraw [cyan] (3,1*\step) circle [radius=.08];
     \filldraw [cyan] (3,2*\step) circle [radius=.08];
     \filldraw [cyan] (2,1*\step) circle [radius=.08];
\end{tikzpicture}
\end{aligned}
\]
\caption{$D_{(213)}\cdot D_{(231)}$}
\label{fig:first}
\end{subfigure}
\begin{subfigure}[t]{\textwidth}
\[
\begin{aligned}
&\begin{tikzpicture}[scale=0.52]
\newcommand\step{0.5}
     \draw [ultra thick] (0.5,0)--(6.5,0);
     \node [below] at (1,0) {$b_1$};
     \node [below] at (2,0) {$b_2$};
     \node [below] at (3,0) {$b_3$};
     \node [below] at (4,0) {$d_1$};
     \node [below] at (5,0) {$d_2$};
     \node [below] at (6,0) {$d_3$};
     \draw [thick] (3,0) -- (3,2*\step);
     \draw [thick] (3,2*\step) -- (5,2*\step);
     \draw [thick] (5,2*\step) -- (5,0);
     \draw [thick] (2,0) -- (2,1*\step);
     \draw [thick] (2,1*\step) -- (4,1*\step);
     \draw [thick] (4,1*\step) -- (4,0);
     \draw [thick] (1,0) -- (1,3*\step);
     \draw [thick] (1,3*\step) -- (6,3*\step);
     \draw [thick] (6,3*\step) -- (6,0);
     \filldraw [cyan] (3,1*\step) circle [radius=.08];
\end{tikzpicture}
\;
\begin{tikzpicture}[scale=0.52]
\draw (0,0)--(0,0);
\filldraw [black] (0,1.8) circle [radius=0.05];
\end{tikzpicture}
\;
\begin{tikzpicture}[scale=0.52]
\newcommand\step{0.5}
     \draw [ultra thick] (0.5,0)--(6.5,0);
     \node [below] at (1,0) {$b_1$};
     \node [below] at (2,0) {$b_2$};
     \node [below] at (3,0) {$b_3$};
     \node [below] at (4,0) {$d_1$};
     \node [below] at (5,0) {$d_2$};
     \node [below] at (6,0) {$d_3$};
     \draw [thick] (3,0) -- (3,1*\step);
     \draw [thick] (3,1*\step) -- (4,1*\step);
     \draw [thick] (4,1*\step) -- (4,0);
     \draw [thick] (2,0) -- (2,3*\step);
     \draw [thick] (2,3*\step) -- (6,3*\step);
     \draw [thick] (6,3*\step) -- (6,0);
     \draw [thick] (1,0) -- (1,2*\step);
     \draw [thick] (1,2*\step) -- (5,2*\step);
     \draw [thick] (5,2*\step) -- (5,0);
     \filldraw [cyan] (2,2*\step) circle [radius=.08];
\end{tikzpicture}
\;
\begin{tikzpicture}[scale=0.52]
\draw (0,0)--(0,0);
\node [below] at (0,1.8) {$=$};
\end{tikzpicture}
\;
\begin{tikzpicture}[scale=0.52]
\newcommand\step{0.5}
     \draw [ultra thick] (0.5,0)--(6.5,0);
     \node [below] at (1,0) {$b_1$};
     \node [below] at (2,0) {$b_2$};
     \node [below] at (3,0) {$b_3$};
     \node [below] at (4,0) {$d_1$};
     \node [below] at (5,0) {$d_2$};
     \node [below] at (6,0) {$d_3$};
     \draw [thick] (3,0) -- (3,3*\step);
     \draw [thick] (3,3*\step) -- (6,3*\step);
     \draw [thick] (6,3*\step) -- (6,0);
     \draw [thick] (2,0) -- (2,1*\step);
     \draw [thick] (2,1*\step) -- (4,1*\step);
     \draw [thick] (4,1*\step) -- (4,0);
     \draw [thick] (1,0) -- (1,2*\step);
     \draw [thick] (1,2*\step) -- (5,2*\step);
     \draw [thick] (5,2*\step) -- (5,0);
     \filldraw [cyan] (3,1*\step) circle [radius=.08];
     \filldraw [cyan] (3,2*\step) circle [radius=.08];
\end{tikzpicture}
\;
\begin{tikzpicture}[scale=0.52]
\draw (0,0)--(0,0);
\node  at (0,1.8) {$+$};
\end{tikzpicture}
\;
\begin{tikzpicture}[scale=0.52]
\newcommand\step{0.5}
     \draw [ultra thick] (0.5,0)--(6.5,0);
     \node [below] at (1,0) {$b_1$};
     \node [below] at (2,0) {$b_2$};
     \node [below] at (3,0) {$b_3$};
     \node [below] at (4,0) {$d_1$};
     \node [below] at (5,0) {$d_2$};
     \node [below] at (6,0) {$d_3$};
     \draw [thick] (3,0) -- (3,2*\step);
     \draw [thick] (3,2*\step) -- (5,2*\step);
     \draw [thick] (5,2*\step) -- (5,0);
     \draw [thick] (2,0) -- (2,3*\step);
     \draw [thick] (2,3*\step) -- (6,3*\step);
     \draw [thick] (6,3*\step) -- (6,0);
     \draw [thick] (1,0) -- (1,1*\step);
     \draw [thick] (1,1*\step) -- (4,1*\step);
     \draw [thick] (4,1*\step) -- (4,0);
     \filldraw [cyan] (2,1*\step) circle [radius=.08];
     \filldraw [cyan] (3,1*\step) circle [radius=.08];
\end{tikzpicture}
\end{aligned}
\]
\caption{$D_{(231)}\cdot D_{(312)}$}
\label{fig:second}
\end{subfigure}
\caption{Illustration of $D_{(213)}\cdot D_{(231)}$ and $D_{(231)}\cdot D_{(312)}$}
\label{fig:pd_12}
\end{figure}
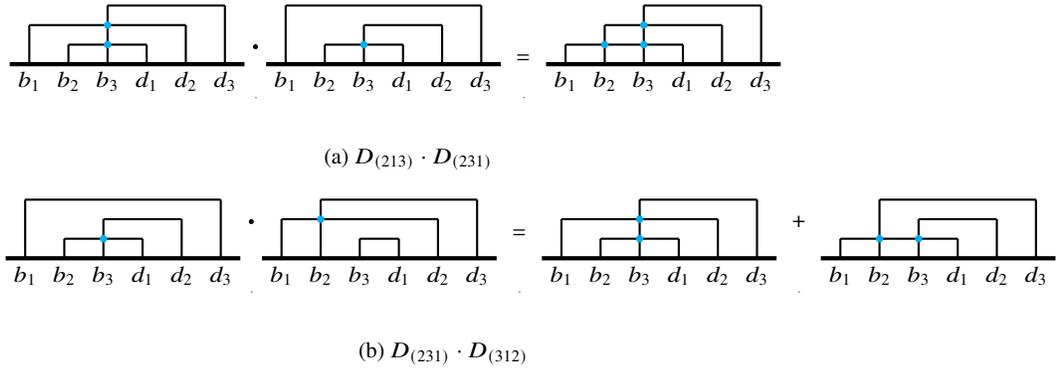
\end{ex}

\begin{ex}
For $n=4$, the following product can be calculated in a similar way as above:
\[
D_{(3241)}\cdot D_{(2413)}=D_{(1324)}+D_{(1243)}.
\]
\end{ex}

Using the same settings as above, this relation can be graphically expressed as
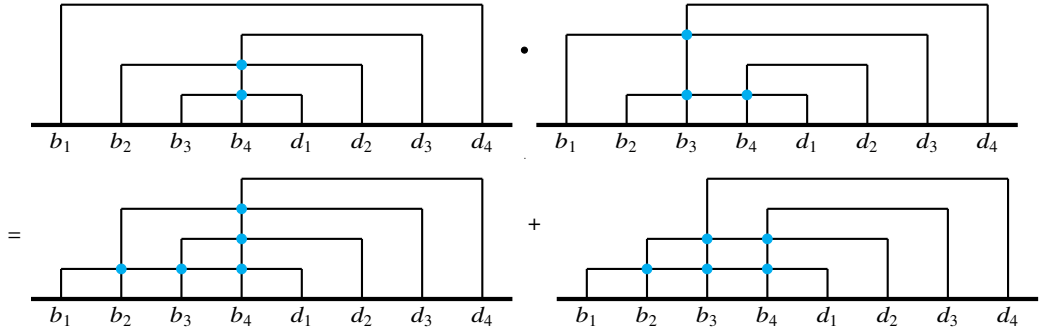
\begin{figure}[H]
    \centering
    \[
\begin{aligned}
&\begin{tikzpicture}[scale=0.8]
\newcommand\step{0.5}
     \draw [ultra thick] (0.5,0)--(8.5,0);
     \node [below] at (1,0) {$b_1$};
     \node [below] at (2,0) {$b_2$};
     \node [below] at (3,0) {$b_3$};
     \node [below] at (4,0) {$b_4$};
     \node [below] at (5,0) {$d_1$};
     \node [below] at (6,0) {$d_2$};
     \node [below] at (7,0) {$d_3$};
     \node [below] at (8,0) {$d_4$};
     \draw [thick] (3,0) -- (3,1*\step);
     \draw [thick] (3,1*\step) -- (5,1*\step);
     \draw [thick] (5,1*\step) -- (5,0);
     \draw [thick] (2,0) -- (2,2*\step);
     \draw [thick] (2,2*\step) -- (6,2*\step);
     \draw [thick] (6,2*\step) -- (6,0);
     \draw [thick] (4,0) -- (4,3*\step);
     \draw [thick] (4,3*\step) -- (7,3*\step);
     \draw [thick] (7,3*\step) -- (7,0);
     \draw [thick] (1,0) -- (1,4*\step);
     \draw [thick] (1,4*\step) -- (8,4*\step);
     \draw [thick] (8,4*\step) -- (8,0);
     \filldraw [cyan] (4,1*\step) circle [radius=.08];
     \filldraw [cyan] (4,2*\step) circle [radius=.08];
\end{tikzpicture}
\;
\begin{tikzpicture}[scale=0.8]
\draw (0,0)--(0,0);
\filldraw [black] (0,1.8) circle [radius=0.05];
\end{tikzpicture}
\;
\begin{tikzpicture}[scale=0.8]
\newcommand\step{0.5}
     \draw [ultra thick] (0.5,0)--(8.5,0);
     \node [below] at (1,0) {$b_1$};
     \node [below] at (2,0) {$b_2$};
     \node [below] at (3,0) {$b_3$};
     \node [below] at (4,0) {$b_4$};
     \node [below] at (5,0) {$d_1$};
     \node [below] at (6,0) {$d_2$};
     \node [below] at (7,0) {$d_3$};
     \node [below] at (8,0) {$d_4$};
     \draw [thick] (2,0) -- (2,1*\step);
     \draw [thick] (2,1*\step) -- (5,1*\step);
     \draw [thick] (5,1*\step) -- (5,0);
     \draw [thick] (4,0) -- (4,2*\step);
     \draw [thick] (4,2*\step) -- (6,2*\step);
     \draw [thick] (6,2*\step) -- (6,0);
     \draw [thick] (1,0) -- (1,3*\step);
     \draw [thick] (1,3*\step) -- (7,3*\step);
     \draw [thick] (7,3*\step) -- (7,0);
     \draw [thick] (3,0) -- (3,4*\step);
     \draw [thick] (3,4*\step) -- (8,4*\step);
     \draw [thick] (8,4*\step) -- (8,0);
     \filldraw [cyan] (3,1*\step) circle [radius=.08];
     \filldraw [cyan] (3,3*\step) circle [radius=.08];
     \filldraw [cyan] (4,1*\step) circle [radius=.08];
\end{tikzpicture} \\
\begin{tikzpicture}[scale=0.8]
\draw (0,0)--(0,0);
\node [below] at (0,1.8) {$=$};
\end{tikzpicture}&
\begin{tikzpicture}[scale=0.8]
\newcommand\step{0.5}
     \draw [ultra thick] (0.5,0)--(8.5,0);
     \node [below] at (1,0) {$b_1$};
     \node [below] at (2,0) {$b_2$};
     \node [below] at (3,0) {$b_3$};
     \node [below] at (4,0) {$b_4$};
     \node [below] at (5,0) {$d_1$};
     \node [below] at (6,0) {$d_2$};
     \node [below] at (7,0) {$d_3$};
     \node [below] at (8,0) {$d_4$};
     \draw [thick] (1,0) -- (1,1*\step);
     \draw [thick] (1,1*\step) -- (5,1*\step);
     \draw [thick] (5,1*\step) -- (5,0);
     \draw [thick] (3,0) -- (3,2*\step);
     \draw [thick] (3,2*\step) -- (6,2*\step);
     \draw [thick] (6,2*\step) -- (6,0);
     \draw [thick] (2,0) -- (2,3*\step);
     \draw [thick] (2,3*\step) -- (7,3*\step);
     \draw [thick] (7,3*\step) -- (7,0);
     \draw [thick] (4,0) -- (4,4*\step);
     \draw [thick] (4,4*\step) -- (8,4*\step);
     \draw [thick] (8,4*\step) -- (8,0);
     \filldraw [cyan] (2,1*\step) circle [radius=.08];
     \filldraw [cyan] (3,1*\step) circle [radius=.08];
     \filldraw [cyan] (4,1*\step) circle [radius=.08];
     \filldraw [cyan] (4,2*\step) circle [radius=.08];
     \filldraw [cyan] (4,3*\step) circle [radius=.08];
\end{tikzpicture}
\;
\begin{tikzpicture}[scale=0.8]
\draw (0,0)--(0,0);
\node  at (0,1.8) {$+$};
\end{tikzpicture}
\;
\begin{tikzpicture}[scale=0.8]
\newcommand\step{0.5}
     \draw [ultra thick] (0.5,0)--(8.5,0);
     \node [below] at (1,0) {$b_1$};
     \node [below] at (2,0) {$b_2$};
     \node [below] at (3,0) {$b_3$};
     \node [below] at (4,0) {$b_4$};
     \node [below] at (5,0) {$d_1$};
     \node [below] at (6,0) {$d_2$};
     \node [below] at (7,0) {$d_3$};
     \node [below] at (8,0) {$d_4$};
     \draw [thick] (1,0) -- (1,1*\step);
     \draw [thick] (1,1*\step) -- (5,1*\step);
     \draw [thick] (5,1*\step) -- (5,0);
     \draw [thick] (2,0) -- (2,2*\step);
     \draw [thick] (2,2*\step) -- (6,2*\step);
     \draw [thick] (6,2*\step) -- (6,0);
     \draw [thick] (4,0) -- (4,3*\step);
     \draw [thick] (4,3*\step) -- (7,3*\step);
     \draw [thick] (7,3*\step) -- (7,0);
     \draw [thick] (3,0) -- (3,4*\step);
     \draw [thick] (3,4*\step) -- (8,4*\step);
     \draw [thick] (8,4*\step) -- (8,0);
     \filldraw [cyan] (2,1*\step) circle [radius=.08];
     \filldraw [cyan] (3,1*\step) circle [radius=.08];
     \filldraw [cyan] (3,2*\step) circle [radius=.08];
     \filldraw [cyan] (4,1*\step) circle [radius=.08];
     \filldraw [cyan] (4,2*\step) circle [radius=.08];
\end{tikzpicture}
\end{aligned}
\]
    \caption{Illustration of $D_{(3241)}\cdot D_{(2413)}$}
    \label{fig:mult-PD-exa}
\end{figure}


%
%
%
%
%
\section{Discussion}
In this paper, we defined a multiplication on persistence diagrams by means of Schubert calculus. The meaning of this multiplication stems from algebro-geometric intersections of varieties of persistence modules. More precisely, by interpreting 
(\ref{eq:fundamental-cl}) and (\ref{eq:fundamental-cl-2}) from persistence modules viewpoints, this multiplication characterizes the intersection of {\it generic persistence modules} in a representation space. This paper only focuses on the algebraic side of the multiplication, but its meaning in topological data analysis, such as the multiplication of homological generators in data science, should also be addressed in future. Here, we remark that the paper \cite{curry} studies a genericity of merge trees in the context of inverse problems of persistence diagrams, so it may be useful to shed new light on the above problem.

\begin{acknowledgement}
The authors would like to express their sincere gratitude to Hiroyuki Ochiai for stimulating discussions. 
YH and KY are supported by the Japan Society for the Promotion of Science, Grant-in-Aid for Transformative Research Areas (A) (22A201).
\end{acknowledgement}

\printbibliography[heading=bibnonumbered, title=References]

\end{document}